\documentclass{amsart}

\usepackage[T1]{fontenc}
\usepackage[utf8]{inputenc}
\usepackage[english]{}
\usepackage{tikz}
\usepackage{amsmath}
\usepackage{amssymb,amsthm}
\usepackage{mathrsfs}
\usepackage{stmaryrd}
\usepackage{enumerate}
\usepackage{fancyhdr}
\usepackage{array}
\usepackage{hyperref}

\pagestyle{plain}

\newtheorem{theo}{Theorem}
\newtheorem{defi}{Definition}
\newtheorem{coro}{Corollary}
\newtheorem{lemm}{Lemma}

\newtheorem{prop}{Proposition}

\newcommand{\Z}{{\mathbb Z}}
\newcommand{\Q}{{\mathbb Q}}

\newcommand{\R}{{\mathbb R}}
\newcommand{\F}{{\mathbb F}}

\renewcommand{\P}{{\mathbb P}}
\newcommand{\E}{{\mathbb E}}

\newcommand{\fonc}[5]{\begin{array}{ccccc}
#1 & : & #2 & \to & #3 \\
 & & #4 & \mapsto & #5 \\
\end{array}}

\let\leq=\leqslant
\let\geq=\geqslant

\newcommand{\tref}[1]{Theorem~\textup{\ref{#1}}}
\newcommand{\pref}[1]{Proposition~\textup{\ref{#1}}}
\newcommand{\cref}[1]{Corollary~\textup{\ref{#1}}}
\newcommand{\lref}[1]{Lemma~\textup{\ref{#1}}}

\newcommand{\dref}[1]{Definition~\textup{\ref{#1}}}

\DeclareMathOperator{\tr}{tr} 
 
\DeclareMathOperator{\Vol}{Vol}
\everymath{\displaystyle}

\title{On the density of cyclotomic lattices constructed from codes}

\author{Philippe Moustrou}
\date{\today \\ \textit{Keywords :} Lattice sphere packings, Minkowski-Hlawka bound, cyclotomic fields, linear codes.  \\ \textit{Mathematics Subject Classification :} 11H31, 11H71.  \\ This study has been carried out with financial support from the French State, managed by the French National Research Agency (ANR) in the frame of the "Investments for the future" Programme IdEx Bordeaux - CPU (ANR-10-IDEX-03-02)} 
\address{Institut de Math\'ematiques de Bordeaux, UMR 5251, universit\'e de Bordeaux, 351 cours de la Lib\'eration, 33400 Talence, France.}
\email{philippe.moustrou@u-bordeaux.fr}

\begin{document}

\begin{abstract}
Recently, Venkatesh improved the best known lower bound for lattice sphere packings by a factor $\log\log n$ for infinitely many dimensions $n$. Here we prove an effective version of this result, in the sense that we exhibit, for the same set of dimensions, finite families of lattices containing a lattice reaching this bound. Our construction uses codes over cyclotomic fields, lifted to lattices via Construction A. 
\end{abstract}

\maketitle

\section{Introduction}

The sphere packing problem in Euclidean spaces asks for the biggest proportion of space that can be filled by a collection of balls with disjoint interiors having the same radius. Here we focus on \textit{lattice} sphere packings, where the centers of the balls are located at the points of a lattice, and we denote by $\Delta_n$ the supremum of the density that can be achieved by such a packing in dimension $n$. Let us recall that the exact value of $\Delta_n$ is known only for dimensions up to $8$  \cite{Conway:1987:SLG:39091} and for dimension $24$ (\cite{MR2600869}). For other dimensions, only lower and upper bounds are known. Moreover, asymptotically, the ratio between these bounds is exponential.

Here we focus on lower bounds. The first important result goes back to the celebrated Minkowski-Hlawka theorem \cite{MR0009782}, stating the inequality $\Delta_n\geq  \frac{\zeta(n)}{2^{n-1}}$ for all $n$, where $\zeta(n)$ denotes the Riemann zeta function. Later, Rogers \cite{MR0022863} improved this bound by a linear factor: he showed that $\Delta_n\geq \frac{cn}{2^{n}}$ for every $n\geq 1$, with $c\approx 0.73$. The constant $c$ was successively improved by Davenport and Rogers \cite{MR0021018}  ($c=1.68$), Ball \cite{MR1191572} ($c=2$) and Vance \cite{MR2803798} ($c=2.2$ when $n$ is divisible by $4$). Recently  Venkatesh has obtained a more dramatic improvement \cite{MR3044452}, showing that for $n$ big enough, $\Delta_n\geq \frac{65963n}{2^{n}}$. Most importantly, he proves that for infinitely many dimensions $n$, $\Delta_n\geq \frac{n\log\log n}{2^{n+1}}$, thus improving for the first time upon the linear growth of the numerator. 

Unfortunately, all these results are of existential nature: their proofs are non constructive by essence, due to the fact that they generally use random arguments over infinite families of lattices. It is then natural to ask for effective versions of these results. It is worth to explain what we mean here by effectiveness. Indeed, designing a practical algorithm, i.e running in polynomial time in the dimension, to construct dense lattices appears to be out of reach to date. More modestly, one aims at exhibiting finite and explicit sets of lattices, possibly of exponential size, in which one is guaranteed to find a dense lattice. 

In this direction, the first to give an effective proof of Minkowski-Hlawka theorem was Rush \cite{MR1022304}. Later, Gaborit and Zémor \cite{MR2339326} provided an effective analogue of Roger's bound for the dimensions of the form $n=2p$ with $p$ a big enough prime number. In both constructions, the lattices are lifted from codes over a finite field, and run in sets of size of the form $\exp(k n \log n)$, with $k$ a constant.

Let us now explain with more details two ingredients that play a crucial role in the proofs of the results above.
The first one is \textit{Siegel's mean value theorem} \cite{MR0012093} which in particular states that, on average over the set $\mathcal{L}$ of $n$-dimensional lattices of volume $1$,
$$\E_\mathcal{L}[|B(r)\cap (\Lambda\setminus\{0\})|]=\Vol(B(r)).$$
It follows that, if $\Vol(B(r))<1$, then there exists a lattice $\Lambda\in \mathcal{L}$ such that $B(r)\cap (\Lambda\setminus\{0\})=\emptyset$, i.e such that the minimum norm $\mu$ of its non zero vectors is greater than $r$. The density of the sphere packing associated to $\Lambda$ then satisfies 
$$\Delta(\Lambda)= \frac{\Vol(B(\mu))}{2^n}>\frac{1}{2^n}.$$
It is worth to point out that the same reasoning holds if $\Vol(B(r))<2$, because lattice vectors of given norm come by pairs $\{\pm x\}$. From this simple remark we get
$$\Delta_n>\frac{2}{2^{n}},$$
which is essentially Minkowski-Hlawka bound.

The second idea follows almost immediately from the previous observation: considering lattices affording a group of symmetries larger than the trivial $\{\pm \text{Id}\}$ should allow to replace the factor $2$ in the numerator by a greater value. To this end, one needs a family of lattices, invariant under the action of a group, for which an analogue of Siegel's mean value theorem holds. This idea is exploited in \cite{MR2339326}, \cite{MR2803798} and \cite{MR3044452}. In particular, this is how Venkatesh obtains the extra $\log \log n$ term, by considering cyclotomic lattices, i.e lattices with an additional structure of $\Z[\zeta_m]$-modules. It turns out that, for a suitable choice of $m$, one can find such lattices in dimension $n=O(\frac{m}{\log\log m})$. 

In this paper, we consider cyclotomic lattices constructed from codes, in order to deal with finite families of lattices. To be more precise, the codes we take are the preimages through the standard surjection associated to a prime ideal $\mathfrak{P}$ of $\Q[\zeta_m]$
$$\Z[\zeta_m]^2 \to (\Z[\zeta_m]/\mathfrak{P})^2 $$
 of all one dimensional subspaces over the residue field $\Z[\zeta_m]/\mathfrak{P}$.

 Our approach is simpler and more straightforward than the previous ones in several respects. On one hand, the analogue of Siegel's mean value theorem in our situation boils down to a simple counting argument on finite sets (see \lref{Moyenne}). On the other hand, the group action, which is, as in \cite{MR2339326}, that of a cyclic group, is in our case  easier to deal with, because it is a free action. As a consequence, we can cope with arbitrary orders $m$, while Gaborit and Z\'emor only consider prime orders.

Our main theorem is  an effective version of Venkatesh's result:
\begin{theo}\label{theointro}
For infinitely many dimensions $n$, a lattice $\Lambda$ such that its density $\Delta(\Lambda)$ satisfies
$$\Delta(\Lambda)\geq \frac{0.89n \log \log n}{2^n}$$
can be constructed with $\exp(1.5n\log n(1+o(1))$ binary operations. 
\end{theo}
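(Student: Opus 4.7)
The plan is to mimic the classical Minkowski--Hlawka averaging strategy, but over the finite family of cyclotomic lattices obtained from the codes $C_L$. Fix a positive integer $m$ (to be optimized) and a prime ideal $\mathfrak{P}\subset\Z[\zeta_m]$ with residue field $\F_q=\Z[\zeta_m]/\mathfrak{P}$; to each one-dimensional $\F_q$-subspace $L\subset\F_q^2$ associate the pulled-back code $C_L=\pi^{-1}(L)$ and let $\Lambda_L\subset\R^{2\phi(m)}$ be the lattice obtained by the Minkowski embedding. All $\Lambda_L$ share the same covolume. I would then apply the mean value lemma (\lref{Moyenne}) to this family: because the group $\mu_{2m}$ of $2m$-th roots of unity in $\Z[\zeta_m]$ acts freely on every $\Lambda_L\setminus\{0\}$, nonzero short vectors come in orbits of size $2m$, and the averaging identity produces some $L$ for which $\Lambda_L$ has no nonzero vectors in a ball $B(r)$ whose volume is just below $2m$ times the covolume. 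This immediately gives $\Delta(\Lambda_L)\geq 2m/2^n$ with $n=2\phi(m)$, the cyclotomic upgrade of the usual Minkowski--Hlawka bound.

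To turn the factor $2m$ into an $n\log\log n$ factor, I would take $m$ to be primorial, $m=\prod_{p\leq y}p$. Mertens' theorem yields $\phi(m)/m\sim e^{-\gamma}/\log y\sim e^{-\gamma}/\log\log m$, and hence $2m/n\sim e^{\gamma}\log\log n$. Propagating the constants carefully through the averaging argument should produce the numerical value $0.89\approx e^{\gamma}/2$ in front of $n\log\log n/2^n$, with the infinite family of admissible dimensions being $n=2\phi(m)$ as $m$ ranges over primorials.

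For the effectiveness claim, the family has exactly $q+1$ codes, and if $q$ is chosen polynomial in $n$ the enumeration of $L$ adds only polynomial overhead. The dominant contribution is the shortest-vector test on each $\Lambda_L$, which I would carry out by enumerating code representatives of norm at most $r$: a direct enumeration in dimension $n$ costs $\exp(O(n\log n))$ binary operations, and a careful optimization of $q$ and $y$ should yield precisely the exponent $1.5\,n\log n\,(1+o(1))$. The two main obstacles I anticipate are (i) proving the mean value lemma in a form that respects the $\mu_{2m}$-action and ensures its freeness on $\Lambda_L\setminus\{0\}$ for every $L$ (which constrains the choice of $\mathfrak{P}$), and (ii) tracking constants precisely through the Minkowski embedding together with the finite-size corrections in Mertens' estimate in order to reach exactly $e^{\gamma}/2$ rather than a weaker numerical value.
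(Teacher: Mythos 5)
Your overall skeleton matches the paper's: lines in $(\mathcal{O}_K/\mathfrak{P})^2$, Construction~A, a pigeonhole average over the $q+1$ codes, orbits under roots of unity to kill all short codewords once the average is below the orbit size, and primorial $m$ plus Mertens to turn $m/\phi(m)$ into $\log\log n$. However, there are two genuine gaps.

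First, a bookkeeping inconsistency in the group action and the resulting constant. You invoke $\mu_{2m}$ with orbits of size $2m$. The group of roots of unity in $\Q[\zeta_m]$ has order $\operatorname{lcm}(2,m)$; for primorial $m$ (which contains the factor $2$, and is the choice you need for your stated Mertens asymptotic $\phi(m)/m\sim e^{-\gamma}/\log\log m$) this is $m$, not $2m$. The paper uses orbits of size $m$, obtains $\Delta>\tfrac{(1-\varepsilon)m}{2^n}$ with $m\sim \tfrac{e^\gamma}{2}n\log\log n$, hence the constant $\tfrac{e^\gamma}{2}\approx 0.89$. Your chain "$\Delta\geq 2m/2^n$ and $2m/n\sim e^\gamma\log\log n$" would give $e^\gamma\approx 1.78$, and is then inconsistent with your own final claim $0.89\approx e^\gamma/2$. (Using odd primorial $m$ would legitimately give $\mu_{2m}$, but then the Mertens factor loses the contribution of $p=2$ and the net constant is again $e^\gamma/2$ --- either way the factor of two you have written down is off.)

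Second, and more seriously, the complexity analysis is structurally wrong. You assume $q$ can be chosen polynomial in $n$ so that enumerating the $q+1$ codes is cheap and the SVP tests dominate. In fact the averaging argument needs the radius $r$ to lie comfortably above the covering radius of $\Lambda_0$ (so that $|\overline{B}(r)|\Vol(\Lambda_0)\approx\Vol(B(r))$, \lref{LBoules}) and below $\tfrac12\mu_{\mathfrak{P}\Lambda_0}$ (so that the discrete and continuous balls coincide). Since $\Vol(B(r))\approx mq\Vol(\Lambda_0)$ while $\tau_{\Lambda_0}$ and $\mu_{\mathfrak{P}\Lambda_0}$ both scale like $\sqrt{\phi(m)}$ times powers of $|d_K|^{1/\phi(m)}$ and $q^{1/\phi(m)}$, these constraints force $q^{1/\phi(m)}$ to dominate polynomials in $m$; the paper's hypothesis is $\phi(m)^2 m = o(q_m^{1/\phi(m)})$, i.e.\ $q_m\gtrsim (m^3)^{\phi(m)}\approx\exp(1.5\,n\log n)$. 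Consequently the dominant cost is the enumeration of the $q_m+1$ lattices, which is $\exp(1.5\,n\log n(1+o(1)))$; the SVP computation on each is only $2^{O(n)}$ and negligible. In short, the exponent $1.5\,n\log n$ is not an algorithmic output of your enumeration; it is the \emph{input size} $\log q_m$ forced by the error analysis of the averaging lemma, and without this you cannot justify the claimed bound.

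A smaller point: you describe the freeness of the root-of-unity action as "constraining the choice of $\mathfrak{P}$." Freeness only needs $p\nmid m$. The constraint on $\mathfrak{P}$ that actually matters is $p\equiv 1\pmod m$, which is needed so that the residue field is $\F_p$ (rather than a large extension) and so that Siegel--Walfisz can locate a suitable $p_m$ of size roughly $(m^3\log m)^{\phi(m)}$.
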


This result follows from a more general analysis of the density 
on average of the elements in the families of $m$-cyclotomic lattices described above, see \tref{Dens Général} and \pref{Complex} for precise statements.

A lattice $\Lambda$ is said to be \textit{symplectic} if there exists an isometry $\sigma$ exchanging $\Lambda$ and its dual lattice, and such that $\sigma^2=-\text{Id}$. Symplectic lattices are closely related to principally polarized Abelian varieties. In \cite{autissier:hal-01022915}, Autissier has adapted Venkatesh's approach to prove the existence of symplectic lattices with the same density. We show that, with some slight modifications, our construction leads to symplectic lattices, thus providing an effective version of Autissier's result (see \tref{Dens Général Symp} and \cref{Cor Ven Symp} ).

The article is organized as follows: Section 2 recalls basics notions about lattices and cyclotomic fields, and introduces the construction of cyclotomic lattices from codes. In Section 3 we state and prove the main results discussed above. Section 4 is dedicated to the case of symplectic lattices. 

\subsection*{Acknowledgements}  I am most grateful to Christine Bachoc for introducing me to this problem, and for her support all along this work. I would also like to thank Arnaud P\^echer and Gilles Z\'emor for fruitful discussions, and Pascal Autissier for useful remarks that lead to improvements on the first version of the paper.

\section{Notations and preliminaries}

\subsection{Lattices in Euclidean spaces}

Let $E$ be a Euclidean space equipped with the scalar product $\langle  , \rangle$. We denote by $||.||$ the norm associated to this scalar product, by $n$ the dimension of $E$, and by $B(r)$ the closed ball of radius $r$ in E: $$B(r)=\{x\in E, ||x||\leq r\}.$$
By Stirling formula, we have
$$\Vol(B(1))=\frac{\pi^{\frac{n}{2}}}{\Gamma(\frac{n}{2}+1)}\sim \frac{1}{\sqrt{n\pi}}\left(\sqrt{\frac{2\pi e}{n}}\right)^n$$
where $f\sim g$ means $\lim_{n\to\infty} f/g=1$. Thus, if $\Vol(B(r))=V$, we get that 
\begin{equation}
 \label{EquivRayon} r\sim \sqrt{\frac{n}{2\pi e}} V^\frac{1}{n}.
\end{equation}

A \textit{lattice} $\Lambda\in E$ is a free discrete $\Z$-module of rank $n$ (for a general reference on lattices, see e.g \cite{Conway:1987:SLG:39091}). 
A \textit{fundamental region} of $\Lambda$ is a region $\mathcal{R}\subset E$ such that for any $\lambda\neq\lambda'\in\Lambda$, the measure of $(\lambda + \mathcal{R})\cap (\lambda' + \mathcal{R})$ is $0$, and $E=\bigcup_{\lambda\in\Lambda} (\lambda + \mathcal{R})$.
The \textit{volume} $\Vol(\Lambda)$ of $\Lambda$ is defined as the volume of any of its fundamental region.
The \textit{Voronoi region} of $\Lambda$ is the particular fundamental region:
$$\mathcal{V}=\mathcal{V}_\Lambda=\{z\in E, \forall \text{ } x\in  \Lambda, ||z-x||\geq ||z||\}.$$ 
We denote by $\mu$ the \textit{minimum} of $\Lambda$:
$$\mu=\mu_\Lambda=\min\{{||x||, x\in \Lambda \setminus \{0\}}\}$$
and by $\tau$ its \textit{covering radius}:
$$\tau=\tau_\Lambda=\sup_{z\in E}\inf_{x\in \Lambda}||z-x||.$$
Taking balls of radius $\mu/2$ centered at the points of $\Lambda$, we get a \textit{packing} in $E$, i.e a set of spheres with pairwise disjoint interiors. The \textit{density} of this packing is given by 
$$\Delta(\Lambda)=\frac{\Vol(B(\mu))}{2^{n}\Vol(\Lambda)}.$$
Finally, we define $\Lambda^\#$, the \textit{dual lattice} of the lattice $\Lambda$:
$$\Lambda^\# =\{ x \in E , \forall \hspace{1mm} y\in \Lambda , \langle x , y \rangle \in \Z\}.$$

\subsection{Cyclotomic fields}

Let $K$ be the cyclotomic field $\Q[\zeta_m]$, where $\zeta_m$ is a primitive $m$-th root of unity. This is a totally imaginary field of degree $\phi(m)$ over $\Q$. Let us define $K_{\R}=K \otimes _\Q \R$. The trace form $\tr(x\overline{y})$ where $\tr$ denotes the trace form of the number field $K$ induces a scalar product on $K_{\R}$, denoted by $\langle , \rangle$, giving $K_{\R}$ the structure of a Euclidean space of dimension $\phi(m)$. We refer to \cite{MR1421575} for general properties of cyclotomic fields.

For every fractional ideal $\mathfrak{A}$, we will use the same notation $\mathfrak{A}$ for the lattice in $K_\R$  which is the image of $\mathfrak{A}$ under the natural embedding $K\to K_{\R}$. We will need informations about lattices defined by fractional ideals of $K$.

The volume of  $\mathcal{O}_K$ is by definition the square root of the absolute value of the discriminant $d_K$ of $K$. 
It is well known  (e.g \cite{MR1421575}) that for the cyclotomic fields
\begin{equation}\label{Disc} |d_K|=\frac{m^{\phi(m)}}{\displaystyle{\prod_{\substack{ l\in \mathbb{P} \\ l|m}} l^{\phi(m)/(l-1)}}} \end{equation}
where $\P$ is the set of prime numbers.

It is easy to see that the minimum of $\mathcal{O}_K$ is $\sqrt{\phi(m)}$: indeed $||1||=\sqrt{\phi(m)}$ and the arithmetic geometric inequality gives $||x|| \geq \sqrt{\phi(m)}$ for all  $x\in \mathcal{O}_K$. For the minimum and the covering radius of general fractional ideals, we will apply the following estimates:
\begin{lemm}[\cite{BayerFluckiger2006305}, propositions 4.1 and 4.2.]\label{BornesMinima}
Let $\mathfrak{A}$ be a fractional ideal of $K$, where $K$ is a number field of degree $n$ over $\Q$. Then we have :
\begin{enumerate}[(i)]
\item[\emph{(i)}] 
$\frac{\mu_{\mathfrak{A}}}{\Vol(\mathfrak{A})^{\frac{1}{n}}} \geq \frac{\sqrt{n}}{\sqrt{|d_K|}^{\frac{1}{n}}},$
\item[\emph{(ii)}]
$\frac{\tau_{\mathfrak{A}}}{\Vol(\mathfrak{A})^{\frac{1}{n}}} \leq \frac{\sqrt{n}}{2}{\sqrt{|d_K|}^{\frac{1}{n}}}.$
\end{enumerate}
\end{lemm}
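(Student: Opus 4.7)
For part (i), I would apply the AM-GM inequality to the $n$ squared moduli $|\sigma_1(x)|^2,\ldots,|\sigma_n(x)|^2$ of the conjugates of a nonzero element $x\in\mathfrak{A}$. Because $\|x\|^2 = \sum_{\sigma}|\sigma(x)|^2$ and $\prod_\sigma|\sigma(x)| = |N_{K/\Q}(x)|$, this immediately gives $\|x\|^2 \geq n\,|N_{K/\Q}(x)|^{2/n}$. Since $\mathfrak{A}$ is an $\mathcal{O}_K$-module, $x\in\mathfrak{A}$ forces $\mathfrak{A}$ to divide the principal ideal $(x)$, so $|N_{K/\Q}(x)| \geq N(\mathfrak{A})$. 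Combining this with $\Vol(\mathfrak{A}) = N(\mathfrak{A})\sqrt{|d_K|}$ and taking the infimum over nonzero $x$ yields the claimed lower bound on $\mu_\mathfrak{A}/\Vol(\mathfrak{A})^{1/n}$.

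For part (ii), my strategy is to reduce the covering-radius estimate to (i) by duality. With respect to the scalar product $\langle x,y\rangle = \tr(x\overline{y})$, the dual $\mathfrak{A}^\#$ of $\mathfrak{A}$ is, up to complex conjugation, the fractional ideal $(\mathfrak{d}_K\mathfrak{A})^{-1}$, where $\mathfrak{d}_K$ denotes the different of $K$; in particular $\Vol(\mathfrak{A}^\#) = 1/\Vol(\mathfrak{A})$, so the bound (i) applies to $\mathfrak{A}^\#$ and produces a lower bound on its minimum $\mu_{\mathfrak{A}^\#}$. Invoking a transference-type inequality of the form $\tau_\Lambda \cdot \mu_{\Lambda^\#} \leq n/2$ then converts this into the desired upper bound on $\tau_\mathfrak{A}$.

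The main obstacle is securing the sharp constant $\tfrac{1}{2}$ appearing in (ii). The classical transference theorems (Lagarias--Lenstra--Schnorr, Banaszczyk) give the same shape of bound but lose a polynomial factor in $n$, so obtaining the precise constant likely requires either a sharpening of these results for lattices coming from number fields, or a direct Minkowski-style covering argument in $K_\R$ using a centrally symmetric convex body adapted to the $\mathcal{O}_K$-module structure of $\mathfrak{A}$. In either case, the statement will only be used through the resulting inequality $\tau_\mathfrak{A}/\mu_\mathfrak{A} \leq \tfrac{n}{2}|d_K|^{1/n}$ obtained by combining (i) and (ii).
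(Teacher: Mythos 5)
The paper does not actually prove this lemma: it is quoted directly from Bayer-Fluckiger (propositions 4.1 and 4.2 of the cited reference), so your argument can only be judged on its own terms. Part (i) is correct and complete, and is the standard argument: AM--GM applied to $\|x\|^2=\sum_\sigma|\sigma(x)|^2$ gives $\|x\|^2\geq n|N_{K/\Q}(x)|^{2/n}$, the inclusion $x\mathcal{O}_K\subseteq\mathfrak{A}$ gives $|N_{K/\Q}(x)|\geq N(\mathfrak{A})$, and $\Vol(\mathfrak{A})=N(\mathfrak{A})\sqrt{|d_K|}$ converts this into the stated bound.

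For part (ii), your reduction is sound: the dual of $\mathfrak{A}$ for the form $\tr(x\overline{y})$ is the complex conjugate of $(\mathfrak{d}_K\mathfrak{A})^{-1}$, hence isometric to an ideal lattice of covolume $1/\Vol(\mathfrak{A})$, so (i) gives $\mu_{\mathfrak{A}^\#}\geq \sqrt{n}\,|d_K|^{-\frac{1}{2n}}\Vol(\mathfrak{A})^{-\frac{1}{n}}$. But the ``main obstacle'' you flag does not exist: Banaszczyk's transference theorem (Math.\ Ann.\ 296 (1993), 625--635) states exactly $2\tau_\Lambda\,\mu_{\Lambda^\#}\leq n$ for every $n$-dimensional lattice; it is the successive-minima inequality $\lambda_i\lambda_{n+1-i}\leq n$ and the older Lagarias--Lenstra--Schnorr estimates that carry worse constants, not the covering-radius bound. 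Quoting that statement closes your computation and yields $\tau_\mathfrak{A}\leq\frac{\sqrt{n}}{2}|d_K|^{\frac{1}{2n}}\Vol(\mathfrak{A})^{\frac{1}{n}}$, so the route works; as written, however, (ii) is left hanging between two unexecuted options, so you must commit to (and correctly cite) the transference inequality, or give the direct covering argument, for this to count as a proof. Two small corrections: combining (i) and (ii) gives $\tau_\mathfrak{A}/\mu_\mathfrak{A}\leq\frac{1}{2}|d_K|^{1/n}$, not $\frac{n}{2}|d_K|^{1/n}$; and the paper never uses the lemma through such a ratio for a single ideal --- it applies (ii) to the ideals building $\Lambda_0$ and (i) to $\mathfrak{P}$ (or to the ideals $\mathfrak{B}_i$) separately, so both parts are needed individually.
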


\subsection{Cyclotomic lattices constructed from codes}

A standard construction of lattices lifts codes over $\F_p$ to sublattices of $\Z^n$, this is the well known \textit{Construction A} (see \cite[Chapter 7]{Conway:1987:SLG:39091}). Here we will deal with a slightly more general construction in the context of cyclotomic fields.

Let us consider as before $K=\Q[\zeta_m]$ and $K_\R$ the Euclidean space associated with $K$. Let $\mathfrak{P}$ be a prime ideal of $\mathcal{O}_K$ lying over a prime number $p$ which does not divide $m$. Then the quotient $F=\mathcal{O}_K/\mathfrak{P}$ is a finite field of cardinality $q=p^f$.

Let $E=K_\R^s$. We still denote by $\langle ,\rangle$ the scalar product  $\langle x ,y\rangle=\sum_{i=1}^s \langle x_i,y_i \rangle$ induced on the $s\phi(m)$-dimensional $\R$-vector space $E$ by that of $K_\R$. Let $\Lambda_0$ be a lattice in $E$ which is a  $\mathcal{O}_K$-submodule of $E$. We consider the canonical surjection
$$\pi:\Lambda_0 \to \Lambda_0 \big/ \mathfrak{P}\Lambda_0.$$

The norm $||.||$ on $E$ associated with $\langle , \rangle$ induces a weight on the quotient space $\Lambda_0 \big/ \mathfrak{P}\Lambda_0$: if $c\in \Lambda_0 \big/ \mathfrak{P}\Lambda_0$,
$$wt(c)=\min\{||z||,\pi(z)=c\}.$$

The quotient $\Lambda_0 \big/ \mathfrak{P}\Lambda_0$ is a vector space of dimension $s$ over the finite field $F$. We will call a $F$-subspace $C$ of $\Lambda_0 \big/ \mathfrak{P}\Lambda_0$ a \textit{code}. We denote by $k$ its dimension and by $d$ its minimal weight, with respect to the weight defined above. 
Finally we denote by $\Lambda_C$ the lattice obtained from $C$ 
$$\Lambda_C=\pi^{-1}(C)$$
and give in the following lemma a summary of its properties:
\begin{lemm}\label{Dens} Let $C$ be a code of $\Lambda_0 \big/ \mathfrak{P}\Lambda_0$ of dimension $k$ and minimal weight $d$. Then :
\begin{enumerate}[(i)]
\item[\emph{(i)}] The volume of $\Lambda_C$ is 
$$\Vol(\Lambda_C)={q^{s-k}}\Vol(\Lambda_0).$$
\item[\emph{(ii)}] The minimum of $\Lambda_C$ is $\mu_{\Lambda_C}=\min\{d,\mu_{\mathfrak{P}\Lambda_0}\}$. 
\item[\emph{(iii)}] If $d\leq \mu_{\mathfrak{P}\Lambda_0}$, the packing density of $\Lambda_C$ is:
$$\Delta(\Lambda_C)=\frac{\Vol(B(d))}{2^{n}{q^{s-k}}\Vol(\Lambda_0)},$$
where $n=s\phi(m)$ is the dimension of $E$.
\end{enumerate}
\end{lemm}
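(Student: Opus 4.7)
The proof breaks naturally into three short arguments, one for each item, with (i) supplying the volume and (ii) supplying the minimum so that (iii) follows by direct substitution into the density formula.

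For (i), the plan is to use the standard index-volume relation for sublattices. Since $p$ does not divide $m$, $\mathfrak{P}$ is coprime to $p\mathcal{O}_K$ in the appropriate sense and $\mathfrak{P}\Lambda_0 \subset \Lambda_C \subset \Lambda_0$; moreover $\Lambda_0/\mathfrak{P}\Lambda_0 \cong F^s$ as an $F$-vector space of dimension $s$, hence $|\Lambda_0/\Lambda_C| = |F^s/C| = q^{s-k}$. Combined with $\Vol(\Lambda_C) = [\Lambda_0:\Lambda_C]\,\Vol(\Lambda_0)$, this yields the claimed formula.

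For (ii), the argument is a case split on whether a nonzero $x \in \Lambda_C$ reduces to $0$ modulo $\mathfrak{P}\Lambda_0$. If $\pi(x)=0$ then $x \in \mathfrak{P}\Lambda_0\setminus\{0\}$, so $\|x\|\geq \mu_{\mathfrak{P}\Lambda_0}$; otherwise $c=\pi(x)$ is a nonzero codeword and by definition of the induced weight $\|x\| \geq wt(c) \geq d$. In both directions the bounds are attained: $\mathfrak{P}\Lambda_0 \subset \Lambda_C$ gives a vector of norm $\mu_{\mathfrak{P}\Lambda_0}$, and a minimum-weight representative of a codeword of weight $d$ gives a vector of norm $d$ in $\Lambda_C$. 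Therefore $\mu_{\Lambda_C} = \min\{d,\mu_{\mathfrak{P}\Lambda_0}\}$.

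For (iii), once (i) and (ii) are established the claim is a one-line substitution. Under the hypothesis $d\leq \mu_{\mathfrak{P}\Lambda_0}$, (ii) gives $\mu_{\Lambda_C}=d$, and plugging into the density formula
\[\Delta(\Lambda_C) = \frac{\Vol(B(\mu_{\Lambda_C}))}{2^{n}\,\Vol(\Lambda_C)}\]
together with (i) produces the stated expression, where $n=s\phi(m) = \dim_\R E$. I expect no serious obstacle here; the only mild subtlety to be careful about is confirming that $\dim_F(\Lambda_0/\mathfrak{P}\Lambda_0) = s$, which relies on $\Lambda_0$ being a free $\mathcal{O}_K$-module (or at least locally free at $\mathfrak{P}$, which is automatic since $\mathcal{O}_K$ is a Dedekind domain and the quotient is over a maximal ideal); this should be noted explicitly so that the index count in (i) and the weight argument in (ii) are on firm ground.
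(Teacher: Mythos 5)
Your proof is correct and follows essentially the same route as the paper: the volume in (i) is computed from the chain $\mathfrak{P}\Lambda_0 \subset \Lambda_C \subset \Lambda_0$ (you use the index $[\Lambda_0:\Lambda_C]=q^{s-k}$, the paper uses $[\Lambda_C:\mathfrak{P}\Lambda_0]=q^k$, which amount to the same computation), while (ii) and (iii) are exactly the direct unpacking of definitions that the paper dispenses with in one line. Your remark that the isomorphism $\Lambda_0/\mathfrak{P}\Lambda_0\cong F^s$ rests on $\Lambda_0$ being (locally) free at $\mathfrak{P}$ is a sensible clarification of a point the paper leaves implicit.
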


\begin{proof}
\begin{enumerate}[(i)]
\item The lattice $\pi^{-1}(C)$ contains the lattice $\mathfrak{P}\Lambda_0$ and we have:
$$|\pi^{-1}(C) / \mathfrak{P}\Lambda_0| = |C| = q^k,$$
so
$$\Vol(\Lambda_C)=\frac{1}{q^k}\Vol(\mathfrak{P}\Lambda_0)={q^{s-k}}\Vol(\Lambda_0).$$
\item and (iii) follow directly from the definitions. 
\end{enumerate}
\end{proof}

To conclude this subsection, we state a lemma that relates the Euclidean ball and the discrete ball $\overline{B}(r):=\{ c\in \Lambda_0 \big/ \mathfrak{P}\Lambda_0, wt(c)\leq r\}$. 

\begin{lemm}\label{LBoules}
Assuming $r< \frac{\mu_{\mathfrak{P}\Lambda_0}}{2}$, we have:
\begin{enumerate}[(i)]
\item[\emph{(i)}]\label{inter} $|\overline{B}(r)|=|\Lambda_0\cap B(r)|$

\item[\emph{(ii)}]\label{VolPoints} $\Vol(B(r-\tau_{\Lambda_0}))\leq |\overline{B}(r)|\Vol(\Lambda_0)\leq \Vol(B(r+\tau_{\Lambda_0}))$.
\item[\emph{(iii)}] If $\overline{B}(r)\cap (C\setminus\{0\})=\emptyset$, then 
\begin{equation}\label{DensRay}
\Delta(\Lambda_C)>\frac{\Vol(B(r))}{2^{n}{q^{s-k}}\Vol(\Lambda_0)}.
\end{equation}

\end{enumerate}
\end{lemm}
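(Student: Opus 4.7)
The three claims are essentially packaging facts: (i) is an injectivity-from-triangle-inequality argument, (ii) is the standard Voronoi tiling trick, and (iii) then combines (i) and (ii) with \lref{Dens}.

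For (i), the plan is to show that the restriction of $\pi$ to $\Lambda_0\cap B(r)$ gives a bijection onto $\overline{B}(r)$. Injectivity: if $x,y\in \Lambda_0\cap B(r)$ satisfy $\pi(x)=\pi(y)$, then $x-y\in \mathfrak{P}\Lambda_0$ and $\|x-y\|\leq 2r<\mu_{\mathfrak{P}\Lambda_0}$, forcing $x=y$. Surjectivity: for $c\in \overline{B}(r)$, the minimum in the definition of $wt(c)$ is attained by discreteness of $\Lambda_0$ at some $z\in \Lambda_0$ with $\pi(z)=c$ and $\|z\|=wt(c)\leq r$. Finally, any $x\in \Lambda_0\cap B(r)$ projects to an element of weight at most $\|x\|\leq r$, so lies in $\overline{B}(r)$.

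For (ii), I would cover $B(r-\tau_{\Lambda_0})$ and be covered by $B(r+\tau_{\Lambda_0})$ using the disjoint translates $\bigsqcup_{x\in \Lambda_0\cap B(r)}(x+\V_{\Lambda_0})$. For every $v\in \V_{\Lambda_0}$ we have $\|v\|\leq \tau_{\Lambda_0}$, so a point $x+v$ with $x\in B(r)$ has norm at most $r+\tau_{\Lambda_0}$, giving the upper bound. For the lower bound, any $y\in B(r-\tau_{\Lambda_0})$ can be written $y=x+v$ with $x\in\Lambda_0$, $v\in \V_{\Lambda_0}$, and then $\|x\|\leq \|y\|+\|v\|\leq r$, so $x\in \Lambda_0\cap B(r)$. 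Taking volumes and using part (i) to replace $|\Lambda_0\cap B(r)|$ by $|\overline{B}(r)|$ yields the double inequality.

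For (iii), the hypothesis $\overline{B}(r)\cap (C\setminus\{0\})=\emptyset$ means every nonzero codeword has weight strictly greater than $r$, hence $d>r$. Combined with $r<\mu_{\mathfrak{P}\Lambda_0}/2<\mu_{\mathfrak{P}\Lambda_0}$, \lref{Dens}(ii) gives $\mu_{\Lambda_C}=\min\{d,\mu_{\mathfrak{P}\Lambda_0}\}>r$, so $\Vol(B(\mu_{\Lambda_C}))>\Vol(B(r))$ and dividing by $2^n\Vol(\Lambda_C)=2^nq^{s-k}\Vol(\Lambda_0)$ (from \lref{Dens}(i)) gives the claimed strict inequality on the density.

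No step is a serious obstacle; the only place to be careful is part (i), where one must check that the hypothesis $r<\mu_{\mathfrak{P}\Lambda_0}/2$ is used twice-over (via $2r<\mu_{\mathfrak{P}\Lambda_0}$) to rule out collisions in the projection.
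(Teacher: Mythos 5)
Your proposal is correct and follows essentially the same route as the paper: the same unique-representative/bijection argument via $2r<\mu_{\mathfrak{P}\Lambda_0}$ for (i), the same Voronoi-translate sandwich $B(r-\tau_{\Lambda_0})\subset\bigcup_{x\in\Lambda_0\cap B(r)}(x+\mathcal{V}_{\Lambda_0})\subset B(r+\tau_{\Lambda_0})$ for (ii), and the deduction of (iii) from Lemma~\ref{Dens}. Your spelled-out version of (iii) via $\mu_{\Lambda_C}=\min\{d,\mu_{\mathfrak{P}\Lambda_0}\}>r$ is a slightly more explicit rendering of what the paper leaves as ``follows directly.''
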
 
\begin{proof}
\begin{enumerate}[(i)]
\item Let $c\in \Lambda_0 \big/ \mathfrak{P}\Lambda_0$ such that $wt(c)\leq r$. We want to prove that $c$ has exactly one representative $x\in\Lambda_0$  which satisfies $||x||\leq r$. Indeed, if $y\in \Lambda_0$ with $y\neq x$ and $\pi(y)=\pi(x)=c$, we have $y=x+z$ with $z\in \mathfrak{P}\Lambda_0\setminus\{0\}$. Then $||x-y||=||z||\geq \mu_{\mathfrak{P}\Lambda_0}> 2r$, a contradiction.

\item Let us consider
$$A=\bigcup_{x\in\Lambda_0\cap B(r)} (x+\mathcal{V}_{\Lambda_0})$$
where $\mathcal{V}_{\Lambda_0}$ is the Voronoi region of $\Lambda_0$.
The volume of $A$ is 
$$\Vol(A)=|\Lambda_0\cap B(r)|\Vol(\Lambda_0)=|\overline{B}(r)|\Vol(\Lambda_0)$$
so the wanted inequalities will follow from the inclusions
$$B(r-\tau_{\Lambda_0})\subset A\subset B(r+\tau_{\Lambda_0}).$$ 

Let us start with the second inclusion.
If $z\in x+\mathcal{V}_{\Lambda_0}$, by definition of the covering radius, we have
$$||z-x||\leq \tau_{\Lambda_0},$$
so if $||x||\leq r$, $||z||\leq r+\tau_{\Lambda_0}$.
For the first inclusion, let $y$ be such that $||y||\leq r-\tau_{\Lambda_0}$. If $x$ denotes the closest point to $y$ in $\Lambda_0$, we have $y\in x + \mathcal{V}_{\Lambda_0}$ and $||x||\leq ||y|| + ||x-y|| \leq r$, so that $y\in A$.
\item It follows directly from \lref{Dens}.

\end{enumerate} 
\end{proof}

\section{The density of cyclotomic lattices constructed from codes}

In this section, we introduce a certain family of lattices obtained from codes as described in the previous subsection, and show that for high dimensions, this family contains lattices having good density.

As before, $K=\Q[\zeta_m]$, $F=\mathcal{O}_K/\mathfrak{P}\simeq \F_q$. Let us set $s=2$ and consider the Euclidean space $E=K_\R^2$, of dimension $2\phi(m)$, in which we fix $\Lambda_0=\mathcal{O}_K^2$.  
\begin{defi}\label{Famille}
We denote by $\mathcal{C}$ the set of the $(q+1)$ $F$-lines of $\Lambda_0 \big/ \mathfrak{P}\Lambda_0=F^2$, and by $\mathcal{L}_{\mathcal{C}}$ the set of lattices of $E$ constructed from the codes in $\mathcal{C}$:
$$\mathcal{L}_{\mathcal{C}}=\{\Lambda_C,C\in \mathcal{C}\}.$$  
\end{defi}

The following lemma evaluates the average of the value of $|\overline{B}(r)\cap C\setminus\{0\}|$ over the family $\mathcal{C}$:
\begin{lemm}\label{Moyenne}
We have:  
$$\E(|\overline{B}(r)\cap (C\setminus\{0\})|) < \frac{|\overline{B}(r)|}{q}.$$
\end{lemm}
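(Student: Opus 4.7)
The plan is to prove this by a simple double-counting argument exploiting the fact that the family $\mathcal{C}$ is nothing but the set of $F$-lines through the origin in $F^2$, which partitions $F^2\setminus\{0\}$.

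First I would observe that the $q+1$ one-dimensional $F$-subspaces of $F^2$ intersect pairwise only at $\{0\}$, and each contains exactly $q-1$ nonzero elements. A dimension count $(q+1)(q-1)=q^2-1=|F^2\setminus\{0\}|$ confirms that
$$\bigsqcup_{C\in\mathcal{C}}(C\setminus\{0\})=F^2\setminus\{0\}=(\Lambda_0/\mathfrak{P}\Lambda_0)\setminus\{0\}.$$
Therefore, for any subset $S$ of $\Lambda_0/\mathfrak{P}\Lambda_0$, each nonzero element of $S$ belongs to exactly one $C\in\mathcal{C}$, while $0$ belongs to none of them.

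Applying this to $S=\overline{B}(r)$ (which contains $0$), I would sum over $\mathcal{C}$:
$$\sum_{C\in\mathcal{C}}|\overline{B}(r)\cap(C\setminus\{0\})|=|\overline{B}(r)\setminus\{0\}|=|\overline{B}(r)|-1.$$
Dividing by $|\mathcal{C}|=q+1$ yields the exact average
$$\E(|\overline{B}(r)\cap(C\setminus\{0\})|)=\frac{|\overline{B}(r)|-1}{q+1}.$$

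To conclude, it suffices to check that $\frac{|\overline{B}(r)|-1}{q+1}<\frac{|\overline{B}(r)|}{q}$, which after clearing denominators reduces to $-q<|\overline{B}(r)|$, trivially true since $|\overline{B}(r)|\geq 1$. There is no real obstacle here: the statement is a soft counting inequality, and the only thing to verify is the partition property of projective lines, which is standard linear algebra over $F$. The strict inequality (rather than equality with $\frac{|\overline{B}(r)|}{q+1}$) comes from comparing with the weaker bound $|\overline{B}(r)|/q$, chosen presumably because it matches the form needed in the forthcoming density estimate.
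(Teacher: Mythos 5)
Your proof is correct and is essentially the paper's own argument: both count, by exchanging the order of summation, that every nonzero element of $\overline{B}(r)$ lies on exactly one of the $q+1$ lines, giving the exact average $|\overline{B}(r)\setminus\{0\}|/(q+1)$, which is then weakened to the strict bound $|\overline{B}(r)|/q$. No gaps; the only difference is that you spell out the final comparison by clearing denominators, which the paper leaves implicit.
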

\begin{proof}
It is a straightforward computation: 
$$\begin{aligned} \E(|\overline{B}(r)\cap (C\setminus\{0\})|)&= \frac{1}{|\mathcal{C}|}\sum_{C\in\mathcal{C}} |\overline{B}(r)\cap (C\setminus\{0\})| \\ &= \frac{1}{|\mathcal{C}|}\sum_{C\in\mathcal{C}} \sum_{\substack{ c\in C  \\ 0<wt(c)\leq r }}1
\\ &= \frac{1}{|\mathcal{C}|}\sum_{c\in \overline{B}(r)\setminus\{0\}} |\{C\in \mathcal{C} \text{ , } c\in C\}| . \end{aligned}$$
There is exactly one line passing through every non zero vector in $F^2$. So
$$\E(|\overline{B}(r)\cap (C\setminus\{0\})|)= \frac{|\overline{B}(r)\setminus\{0\}|}{|\mathcal{C}|}< \frac{|\overline{B}(r)|}{q}.$$
\end{proof}

From now on, $q$ will vary with $m$, so we adopt the notation $q_m$ instead of $q$.
We show that the family $\mathcal{L}_\mathcal{C}$ of lattices contains, when $m$ is big enough and when $q_m$ grows in a suitable way with $m$, lattices having high density.

\begin{theo}\label{Dens Général} For every $1>\varepsilon >0$, if $\phi(m)^2 m=o({q_m}^{\frac{1}{\phi(m)}})$, then for $m$ big enough, the family of lattices $\mathcal{L}_\mathcal{C}$ contains a lattice $\Lambda\subset \R^{2\phi(m)}$ satisfying
$$\Delta(\Lambda)> \frac{(1-\varepsilon)m}{2^{2\phi(m)}}.$$

\end{theo}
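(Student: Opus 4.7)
The plan is to apply the Siegel-type averaging from \lref{Moyenne} combined with the key observation that, because of the action of the roots of unity $\zeta_m$, the values $|\overline{B}(r)\cap(C\setminus\{0\})|$ come in multiples of $m$; this replaces the trivial lower bound $1$ on a nonzero value by $m$, giving the factor $m$ in the numerator.

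\medskip

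\textbf{Step 1: Choice of $r$.} Pick $r$ so that
\[
\Vol(B(r+\tau_{\Lambda_0})) = m\,q_m\,\Vol(\Lambda_0).
\]
By \lref{LBoules}(ii), this forces $|\overline{B}(r)|\leq m\,q_m$.

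\medskip

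\textbf{Step 2: Orbit argument.} The hypothesis $p\nmid m$ guarantees that $\bar{\zeta}_m\in F^*$ has order exactly $m$. Multiplication by $\bar{\zeta}_m$ on $F^2=\Lambda_0/\mathfrak{P}\Lambda_0$ lifts to multiplication by $\zeta_m$ on $\Lambda_0$, which is an isometry of $E$; hence it preserves weights and so $\overline{B}(r)$ is $\langle\bar{\zeta}_m\rangle$-invariant. Each code $C\in\mathcal{C}$, being an $F$-line, is also stable under scalar multiplication by $\bar{\zeta}_m$. Finally, the action of $\langle\bar{\zeta}_m\rangle$ on $F^2\setminus\{0\}$ is free (if $\bar{\zeta}_m^k v=v$ with $v\neq 0$ then $\bar{\zeta}_m^k=1$, forcing $m\mid k$). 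Therefore $|\overline{B}(r)\cap(C\setminus\{0\})|$ is a nonnegative multiple of $m$ for every $C\in\mathcal{C}$.

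\medskip

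\textbf{Step 3: Existence of a good code.} By \lref{Moyenne},
\[
\E_{\mathcal{C}}\bigl(|\overline{B}(r)\cap(C\setminus\{0\})|\bigr) < \frac{|\overline{B}(r)|}{q_m} \leq m.
\]
Since the individual values are multiples of $m$, at least one $C\in\mathcal{C}$ must satisfy $\overline{B}(r)\cap(C\setminus\{0\})=\emptyset$. To invoke \lref{LBoules}(iii) we must also check $r<\mu_{\mathfrak{P}\Lambda_0}/2$; this is a routine estimate using \lref{BornesMinima}(i) applied to the ideal $\mathfrak{P}$, which gives $\mu_{\mathfrak{P}\Lambda_0}\geq \sqrt{\phi(m)}\,q_m^{1/\phi(m)}$, and the hypothesis forces $r/\mu_{\mathfrak{P}\Lambda_0}\to 0$.

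\medskip

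\textbf{Step 4: Density estimate.} For such $C$, \lref{LBoules}(iii) and the choice of $r$ yield
\[
\Delta(\Lambda_C) > \frac{\Vol(B(r))}{2^{n}\,q_m\,\Vol(\Lambda_0)} = \frac{m}{2^{n}}\cdot\left(\frac{r}{r+\tau_{\Lambda_0}}\right)^{n}.
\]
The remaining task is to show $(r/(r+\tau_{\Lambda_0}))^n\to 1$, equivalently $n\,\tau_{\Lambda_0}/r\to 0$. Using (\ref{EquivRayon}), the choice of $r$, the formula (\ref{Disc}) which gives $|d_K|^{1/\phi(m)}\leq m$, and \lref{BornesMinima}(ii) applied to $\mathcal{O}_K$ (with $\tau_{\Lambda_0}=\sqrt{2}\,\tau_{\mathcal{O}_K}$), one obtains
\[
\frac{n\,\tau_{\Lambda_0}}{r} \;\lesssim\; \frac{\phi(m)\sqrt{m}}{q_m^{1/(2\phi(m))}},
\]
which is precisely $o(1)$ under the assumption $\phi(m)^2 m = o(q_m^{1/\phi(m)})$. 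Thus $(1-\tau_{\Lambda_0}/(r+\tau_{\Lambda_0}))^n > 1-\varepsilon$ for $m$ large, completing the proof.

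\medskip

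The main obstacle is the asymptotic bookkeeping in Step~4: one must simultaneously control the covering radius $\tau_{\Lambda_0}$ (via the discriminant bound) and the radius $r$ (via Stirling), and verify that the single hypothesis $\phi(m)^2 m = o(q_m^{1/\phi(m)})$ is strong enough to make $n\tau_{\Lambda_0}/r$ vanish — the delicate point being that $|d_K|^{1/\phi(m)}$ is of order $m$, so the numerator grows polynomially in $m$ and only a quasi-exponential growth of $q_m$ in $\phi(m)$ saves the day.
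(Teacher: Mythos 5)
Your proof is correct and takes essentially the same route as the paper: the averaging over the $q+1$ lines (\lref{Moyenne}), the free action of the $m$-th roots of unity forcing $|\overline{B}(r)\cap(C\setminus\{0\})|$ to be a multiple of $m$, and the two asymptotic checks $r<\mu_{\mathfrak{P}\Lambda_0}/2$ and $n\tau_{\Lambda_0}/r\to 0$ via \lref{BornesMinima} and $|d_K|\leq m^{\phi(m)}$ are exactly the paper's ingredients (packaged there as \lref{Conds}). The only cosmetic difference is your normalization $\Vol(B(r+\tau_{\Lambda_0}))=m\,q_m\Vol(\Lambda_0)$, whereas the paper fixes $\Vol(B(r_m))=(1-\varepsilon)m\,q_m\Vol(\Lambda_0)$ and absorbs the factor $(1+\tau_{\Lambda_0}/r_m)^{2\phi(m)}\to 1$ into the $\varepsilon$, which changes nothing of substance.
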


We start with a technical lemma.

\begin{lemm}\label{Conds} Let $\rho_m=\sqrt{\frac{\phi(m)}{\pi e}}(q_m\Vol(\Lambda_0))^{\frac{1}{2\phi(m)}}$. If $\phi(m)^2 m=o({q_m}^{\frac{1}{\phi(m)}})$, then
\begin{enumerate}[(i)]
\item[\emph{(i)}] $\lim_{m\to \infty}\frac{\phi(m)\tau_{\Lambda_0}}{\rho_m}=0$,
\item[\emph{(ii)}] For $m$ big enough, $\rho_m< \frac{\mu_{\mathfrak{P}\Lambda_0}}{2}$.
\end{enumerate}

\end{lemm}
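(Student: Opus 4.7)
The plan is to reduce both parts to the hypothesis $\phi(m)^2 m = o(q_m^{1/\phi(m)})$ by applying \lref{BornesMinima} to the appropriate ideals and controlling the discriminant via formula \eqref{Disc}. A key preliminary observation, that I would record first, is that since $\Lambda_0 = \mathcal{O}_K^2$ is the orthogonal sum of two copies of $\mathcal{O}_K$, one has $\mu_{\mathfrak{P}\Lambda_0} = \mu_\mathfrak{P}$ and $\tau_{\Lambda_0} = \sqrt{2}\,\tau_{\mathcal{O}_K}$, which reduces everything to quantities attached to $\mathcal{O}_K$ or $\mathfrak{P}$ as lattices of rank $\phi(m)$ in $K_\R$. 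I would also note that $\Vol(\Lambda_0) = |d_K|$ and $\Vol(\mathfrak{P}) = q_m\sqrt{|d_K|}$, since $[\mathcal{O}_K:\mathfrak{P}] = q_m$. Finally, from \eqref{Disc} one has the bound $|d_K|^{1/\phi(m)} \leq m$, which is the only arithmetic input needed from cyclotomic geometry.

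For (ii), I would apply \lref{BornesMinima}(i) to $\mathfrak{P}$: using $\Vol(\mathfrak{P})^{1/\phi(m)} = q_m^{1/\phi(m)} |d_K|^{1/(2\phi(m))}$, the $|d_K|$ factors cancel and one obtains $\mu_\mathfrak{P} \geq \sqrt{\phi(m)}\, q_m^{1/\phi(m)}$. Forming the ratio $\rho_m/(\mu_{\mathfrak{P}\Lambda_0}/2)$, the $\sqrt{\phi(m)}$ factors cancel and what remains has, up to a constant, the form $(|d_K|/q_m)^{1/(2\phi(m))}$. Using $|d_K|^{1/\phi(m)} \leq m$ yields an upper bound proportional to $\sqrt{m}\,/\,q_m^{1/(2\phi(m))}$. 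Since the hypothesis implies in particular $m = o(q_m^{1/\phi(m)})$, this ratio tends to $0$ and is eventually less than $1$, giving (ii).

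For (i), I would apply \lref{BornesMinima}(ii) to $\mathcal{O}_K$, which yields $\tau_{\mathcal{O}_K} \leq \tfrac{\sqrt{\phi(m)}}{2}\,|d_K|^{1/\phi(m)}$, hence $\tau_{\Lambda_0} \leq \tfrac{\sqrt{2\phi(m)}}{2}\,|d_K|^{1/\phi(m)}$. Inserting this into the ratio $\phi(m)\tau_{\Lambda_0}/\rho_m$, the $\sqrt{\phi(m)}$ factors in numerator and denominator of $\rho_m$ combine so as to produce an expression of the form $\phi(m)\,(|d_K|/q_m)^{1/(2\phi(m))}$ up to a constant. Using once again $|d_K|^{1/\phi(m)} \leq m$, this is bounded by a constant times $\phi(m)\sqrt{m}\,/\,q_m^{1/(2\phi(m))}$. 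Squaring, this tends to $0$ exactly when $\phi(m)^2 m = o(q_m^{1/\phi(m)})$, which is our hypothesis.

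There is no real obstacle: the estimate is essentially a bookkeeping exercise, and the hypothesis is calibrated precisely so that (i), the more sensitive of the two (because of the extra $\phi(m)$ factor in front of $\tau_{\Lambda_0}$), holds with equality of growth rates; (ii) then comes essentially for free from the same manipulation. The only point deserving care is not to lose too much when bounding $|d_K|^{1/\phi(m)}$ by $m$: the bound is tight for prime $m$, so no stronger hypothesis than the stated one is available in general.
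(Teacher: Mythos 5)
Your proof is correct and follows essentially the same route as the paper: both apply Lemma~\ref{BornesMinima}(ii) to $\mathcal{O}_K$ and (i) to $\mathfrak{P}$, use $\Vol(\mathfrak{P})=q_m\sqrt{|d_K|}$ and the bound $|d_K|^{1/\phi(m)}\leq m$ from \eqref{Disc}, and reduce both estimates to the hypothesis $\phi(m)^2 m=o(q_m^{1/\phi(m)})$. The only differences are cosmetic (you phrase the conclusion as ratios tending to $0$, the paper bounds $\rho_m$ and $\mu_{\mathfrak{P}}$ separately), and your parenthetical claim that $|d_K|^{1/\phi(m)}\leq m$ is tight for prime $m$ is only asymptotically so, which is immaterial here.
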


\begin{proof}
\begin{enumerate}[(i)]
\item We have:
$$\frac{\phi(m)\tau_{\Lambda_0}}{\rho_m}=\frac{\sqrt{\pi e \phi(m)} \tau_{\Lambda_0}}{(q_m\Vol(\Lambda_0))^{\frac{1}{2\phi(m)}}}.$$

Since $\Lambda_0=\mathcal{O}_K\times\mathcal{O}_K$, we have  $\tau_{\Lambda_0}=\sqrt{2}\tau_{\mathcal{O}_K}$ and $\Vol(\Lambda_0)=\Vol(\mathcal{O}_K)^2$. Then, by (ii) of \lref{BornesMinima}, 
$$\frac{\tau_{\mathcal{O}_K}}{\Vol(\mathcal{O}_K)^{\frac{1}{\phi(m)}}}\leq \frac{\sqrt{\phi(m)}}{2}{|d_K|}^{\frac{1}{2\phi(m)}}.$$
Applying $|d_K|\leq m^{\phi(m)}$ (following \eqref{Disc}), we obtain

$$\frac{\tau_{\mathcal{O}_K}}{\Vol(\mathcal{O}_K)^{\frac{1}{\phi(m)}}}\leq\frac{\sqrt{m\phi(m)}}{2}.$$

So
$$\frac{\phi(m)\tau_{\Lambda_0}}{\rho_m}\leq\sqrt{\frac{\pi e}{2}}{ \phi(m) \sqrt{m}}\,{q_m^{-\frac{1}{2\phi(m)}}}$$
which tends to $0$ when $m$ goes to infinity, by hypothesis.

\item We have:
$$\begin{aligned} \rho_m=\sqrt{\frac{\phi(m)}{\pi e}}\big(q_m\Vol(\Lambda_0)\big)^{\frac{1}{2\phi(m)}} &\leq \frac{1}{2} \sqrt{\phi(m)} q_m ^{\frac{1}{2\phi(m)}} {|d_K|}^{\frac{1}{2\phi(m)}} \\ &\leq \frac{1}{2} \sqrt{\phi(m)} q_m ^{\frac{1}{2\phi(m)}} \sqrt{m} \end{aligned}.$$

Because $\mathfrak{P}\Lambda_0=\mathfrak{P}\times \mathfrak{P}$, $\mu_{\mathfrak{P}\Lambda_0}=\mu_{\mathfrak{P}}$. Then, by (i) of \lref{BornesMinima}, since $\Vol(\mathfrak{P})=q_m \sqrt{|d_K|}$, 
$$\mu_{\mathfrak{P}}\geq q_m^{\frac{1}{\phi(m)}} \sqrt{\phi(m)}.$$
The hypothesis on $q_m$ ensures in particular that for $m$ big enough, we have $m<q_m^{\frac{1}{\phi(m)}}$, and thus
$$\rho_m<\frac{1}{2} \sqrt{\phi(m)} q_m ^{\frac{1}{\phi(m)}} \leq \frac{\mu_{\mathfrak{P}\Lambda_0}}{2} .$$

\end{enumerate}
\end{proof}

Now we can prove \tref{Dens Général}.

\begin{proof}[Proof of \tref{Dens Général}] 
Let us fix $1>\varepsilon >0$. Let $r_m>0$ be the radius such that $\Vol(B_{r_m})=(1-\varepsilon)m q_m\Vol(\Lambda_0)$. By \eqref{EquivRayon}, $r_m\sim \rho_m$, where $\rho_m$ is the radius defined in \lref{Conds}.
Applying \lref{Moyenne}, we get
$$ \E(|\overline{B}(r_m)\cap (C\setminus\{0\})|) < \frac{|\overline{B}(r_m)|}{q_m}.$$
Because $r_m\sim \rho_m$, by (ii) of \lref{Conds}, $r_m<\frac{\mu_{\mathfrak{P}\Lambda_0}}{2}$, so we can apply (ii) of \lref{LBoules}, so that
$$ \begin{aligned} \E(|\overline{B}(r_m)\cap (C\setminus\{0\})|) < \frac{\Vol(B(r_m+\tau_{\Lambda_0}))}{q_m\Vol(\Lambda_0)} &=\frac{\Vol(B(r_m))}{q_m\Vol(\Lambda_0)}\left(1+\frac{\tau_{\Lambda_0}}{r_m}\right)^{2\phi(m)} \\ &= (1-\varepsilon)m\left(1+\frac{\tau_{\Lambda_0}}{r_m}\right)^{2\phi(m)}. \end{aligned}$$
Now applying (i) of \lref{Conds}, we have $\lim_{m\to\infty}\left(1+\frac{\tau_{\Lambda_0}}{r_m}\right)^{2\phi(m)}=1$, and so, for $m$ big enough, 
\begin{equation}\label{IneqMoy}
\E(|\overline{B}(r_m)\cap (C\setminus\{0\})|) < m.
\end{equation} 

Now comes the crucial argument involving the action of the $m$-roots of unity. From \eqref{IneqMoy}, there is at least one code $C$ in $\mathcal{C}$ which satisfies $|\overline{B}(r_m)\cap (C\setminus\{0\})| < m$. Because the codes we consider are stable under the action of the $m$-roots of unity, which preserves the weight of the codewords, and because the length of every non zero orbit under this action is $m$, we can conclude that  $\overline{B}(r_m)\cap (C\setminus\{0\}) = \emptyset$, and so by (iii) of \lref{LBoules} that,
$$\Delta(\Lambda_C)>\frac{\Vol(B(r_m))}{2^{2\phi(m)}{q_m}\Vol(\Lambda_0)}=\frac{(1-\varepsilon)m}{2^{2\phi(m)}}.$$
\end{proof}

\tref{Dens Général} shows that for every big enough dimension of the form $n=2\phi(m)$ our construction provides lattices having density approaching $\frac{m}{2^n}$, thus larger than $\frac{cn}{2^n}$ with $c=1/2$. A particular sequence of dimensions leads to a better lower bound:

\begin{coro}\label{CorVen} For infinitely many dimensions, the family $\mathcal{L}_\mathcal{C}$ contains a lattice $\Lambda\subset \R^n$ satisfying
$$\Delta(\Lambda)\geq \frac{0.89 n \log\log n}{2^{n}}.$$

\end{coro}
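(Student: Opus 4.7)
The plan is to apply \tref{Dens Général} to a carefully chosen sequence of integers $m$, exploiting the fact that the ratio $m/\phi(m)$ can be made of order $\log \log m$. More precisely, I would invoke the classical theorem of Mertens-Landau that
$$\liminf_{m\to\infty} \frac{\phi(m)\log\log m}{m}=e^{-\gamma}$$
where $\gamma$ is the Euler-Mascheroni constant. Thus there exists an increasing sequence $(m_k)_{k\geq 1}$ such that, along this sequence, $m_k/\phi(m_k)=(e^\gamma+o(1))\log\log m_k$. The key numerical observation is that $e^\gamma/2=0.89052\ldots > 0.89$, which is precisely the source of the constant in the statement.

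Next, for each such $m_k$, I need to pick the prime ideal $\mathfrak{P}$ (equivalently, the residue field cardinality $q_{m_k}$) so that the hypothesis of \tref{Dens Général}, namely $\phi(m)^2 m = o(q_m^{1/\phi(m)})$, is fulfilled along the sequence. Using Dirichlet's theorem on primes in arithmetic progressions, for each $k$ one can choose a prime $p_k\nmid m_k$ with $p_k\geq \bigl(\phi(m_k)^2 m_k\bigr)^{\phi(m_k)^2}$; taking any prime ideal $\mathfrak{P}_k$ of $\mathcal{O}_{\Q[\zeta_{m_k}]}$ above $p_k$ yields a residue field of size $q_{m_k}\geq p_k$, so that
$$q_{m_k}^{1/\phi(m_k)}\geq (\phi(m_k)^2 m_k)^{\phi(m_k)},$$
which is much larger than $\phi(m_k)^2 m_k$. (One could alternatively require $p_k\equiv 1\pmod{m_k}$ so that $p_k$ splits completely, but this is not needed.)

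Fix $\varepsilon>0$ small enough that $(1-\varepsilon)e^\gamma/2\geq 0.89$. Applying \tref{Dens Général} to each $m_k$ with this $\varepsilon$, we obtain for $k$ large enough a lattice $\Lambda_k\in\mathcal{L}_\mathcal{C}$ in dimension $n_k=2\phi(m_k)$ with
$$\Delta(\Lambda_k)>\frac{(1-\varepsilon)m_k}{2^{n_k}}=\frac{(1-\varepsilon)}{2}\cdot\frac{m_k}{\phi(m_k)}\cdot\frac{n_k}{2^{n_k}}.$$
Since $n_k=2\phi(m_k)\leq 2m_k$ while $n_k\geq 2e^{-\gamma}m_k/\log\log m_k$ along the sequence, the quantities $m_k$ and $n_k$ are polynomially related, so $\log\log m_k=(1+o(1))\log\log n_k$. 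Combined with $m_k/\phi(m_k)=(e^\gamma+o(1))\log\log m_k$, this gives
$$\Delta(\Lambda_k)\geq \frac{(1-\varepsilon)(e^\gamma+o(1))}{2}\cdot \frac{n_k\log\log n_k}{2^{n_k}}\geq \frac{0.89\,n_k\log\log n_k}{2^{n_k}}$$
for $k$ sufficiently large, as required.

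The only nontrivial step is the appeal to the Mertens-Landau asymptotic for $\liminf\phi(m)/m$: this is precisely what makes $0.89$ (rather than, say, $1/2$ or $1$) appear, and picking the right sequence $(m_k)$ concretely—e.g. primorials $m_k=p_1p_2\cdots p_k$—is where the substantive input lies. Everything else is a matter of (i) ensuring the growth condition on $q_m$ via an appropriate choice of prime and (ii) bookkeeping to pass from $m$ to $n=2\phi(m)$ inside the double logarithm.
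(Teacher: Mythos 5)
Your argument is correct and essentially identical to the paper's: take primorial $m$ so that Mertens' theorem gives $m/\phi(m)\sim e^{\gamma}\log\log m$, use $e^{\gamma}/2>0.89$ to fix $\varepsilon$, apply \tref{Dens Général}, and do the same bookkeeping to pass from $m$ to $n=2\phi(m)$ inside the double logarithm. The only difference is that you spell out a choice of $q_m$ (any sufficiently large prime not dividing $m$ suffices; Dirichlet is not needed), a point the paper defers to \lref{Para}, where a much smaller prime $p_m\equiv 1\pmod{m}$ is chosen so as to also get the complexity bound of \pref{Complex} --- your astronomically large prime would not give that bound, but this is irrelevant to the corollary as stated.
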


\begin{proof}

To get the optimal gain between $m$ and $2\phi(m)$, we take $m=\prod_{ \substack{ l\in \mathbb{P} \\ l\leq X }} l $, where $X$ is a positive real number, which tends to infinity. Thanks to Mertens' theorem \cite{MR1574590}, we can evaluate: 
\begin{equation}\label{equiv}
\frac{m}{\phi(m)}\sim {e^{\gamma}}\log\log m.
\end{equation}
where $\gamma$ is the Euler-Mascheroni constant which satisfies $\gamma>0.577$. 

So we get 
\begin{equation}\label{Mertens}
m\sim {\phi(m)}{e^{\gamma}}\log\log m\sim \frac{e^{\gamma}}{2}n\log\log n.
\end{equation} 

Let us set $\delta:= 2e^{-\gamma}0.89$. Because $\frac{e^{\gamma}}{2}>0.89$, $\delta<1$. Then by \tref{Dens Général}, we get a lattice $\Lambda\subset \R^n$ such that 
 $$\Delta(\Lambda)> \frac{\delta m}{2^{n}}.$$
So by \eqref{Mertens}, for $m$ big enough, 
 $$\Delta(\Lambda)\geq \frac{0.89 n \log \log n}{2^{n}}.$$
 
 \end{proof}
 
Finally we evaluate the complexity of constructing a lattice $\Lambda$ with the desired density: 

\begin{prop}\label{Complex}
Let $n=2\phi(m)$. For every $1>\varepsilon>0$, the construction of a lattice $\Lambda\subset\R^n$ satisfying $$\Delta(\Lambda)> \frac{(1-\varepsilon)m}{2^{2\phi(m)}}$$ requires $\exp(1.5n\log n(1+o(1))$ binary operations. 
\end{prop}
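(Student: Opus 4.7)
The plan is to algorithmize the averaging argument of \tref{Dens Général}. I would enumerate the short lattice vectors of $\Lambda_0\cap B(r_m)$, project each into $F^2=\Lambda_0/\mathfrak{P}\Lambda_0$, and mark the unique $F$-line of $\mathcal{C}$ through this projection as ``bad''. Each bad line is witnessed by a full orbit of size $m$ under the free action of the $m$-th roots of unity (valid since $p\nmid m$), so the number of marked lines is at most $|\overline{B}(r_m)\setminus\{0\}|/m$, which is strictly smaller than $|\mathcal{C}|=q_m+1$ by the estimate $\E(|\overline{B}(r_m)\cap(C\setminus\{0\})|)<m$ from the proof of \tref{Dens Général}. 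One then outputs any unmarked line.

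To minimize complexity I would choose $q_m$ as small as possible subject to the hypothesis $\phi(m)^2 m=o(q_m^{1/\phi(m)})$, which forces $\log q_m=\phi(m)\log(\phi(m)^2 m)(1+o(1))$. With $m$ the radical product chosen in \cref{CorVen}, so that $\phi(m)\sim n/2$ and $m\sim\tfrac{1}{2}e^{\gamma}n\log\log n$, the inner logarithm evaluates to $3\log n+o(\log n)$, and hence
$$\log q_m=1.5\,n\log n\,(1+o(1)).$$
A concrete realization $q_m=p$ with $p\equiv 1\pmod m$ of the required magnitude is furnished by Dirichlet's theorem (quantified via prime number theorem in arithmetic progressions) and can be located in time polynomial in $n\log n$, which is negligible.

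For the complexity bound, the dominant task is the enumeration of $\Lambda_0\cap B(r_m)$, whose cardinality is $(1-\varepsilon)mq_m(1+o(1))$ by \lref{LBoules}. Each such vector is encoded in $O(n\log q_m)$ bits, and its projection into $F^2$ followed by the identification of the $F$-line through it costs a polynomial in $n\log q_m$ bit operations. Aggregating gives
$$mq_m\cdot\mathrm{poly}(n\log n)=\exp\!\bigl(\log q_m+O(\log n)\bigr)=\exp\!\bigl(1.5\,n\log n\,(1+o(1))\bigr),$$
since $\log q_m\sim 1.5\,n\log n$ dominates every polynomial overhead.

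The main obstacle I anticipate is showing that the enumeration of $\Lambda_0\cap B(r_m)$ can indeed be performed in time matching its output size up to polynomial overhead, rather than scanning a bounding box in the integral basis $\{\zeta_m^i\}_{0\leq i<\phi(m)}$, which would contribute a spurious exponential factor. A standard Gram--Schmidt based enumeration on $\mathcal{O}_K^2$ accomplishes this, but should be stated cleanly; once it is, the remainder of the estimate is routine bookkeeping and the $(1+o(1))$ absorption of the polynomial factors is automatic.
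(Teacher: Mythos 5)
Your argument is sound in outline but follows a genuinely different algorithmic route from the paper. The paper's proof has two steps: first it invokes \lref{Para} (Siegel--Walfisz) to guarantee a prime $p_m\equiv 1 \bmod m$ below $(m^3\log m)^{\phi(m)}$, and counts the brute-force search for it as $\mathcal{O}(m^3\log m)^{\phi(m)}=e^{1.5n\log n(1+o(1))}$ operations; second, it simply enumerates the $q_m+1$ lattices of $\mathcal{L}_\mathcal{C}$ and computes the minimum of each with a $2^{O(n)}$ shortest-vector algorithm, which is negligible next to the size of the family. You instead make the averaging argument itself algorithmic: enumerate $\Lambda_0\cap B(r_m)$ once, mark the bad lines, and output an unmarked one. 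Your counting is correct (the bijection of \lref{LBoules}~(i) under $r_m<\mu_{\mathfrak{P}\Lambda_0}/2$, the orbit argument giving at least $m$ witnesses per marked line, and $|\overline{B}(r_m)\setminus\{0\}|<m(q_m+1)$ from the proof of \tref{Dens Général}), and it buys you independence from an SVP black box, at the price of having to justify output-sensitive ball enumeration --- the point you yourself flag. That justification is indeed available and the slack is generous: since $r_m$ is far larger than both the covering radius and the Gram--Schmidt lengths of the natural basis of $\mathcal{O}_K^2$, Fincke--Pohst-type pruned enumeration has overhead only $\exp(O(n))$ over the leaf count $\approx mq_m$, which is absorbed in the $(1+o(1))$; only a naive box scan, as you note, would cost an extra $\exp(\tfrac{n}{2}\log n)$ and ruin the constant. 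The paper's route avoids this discussion entirely by paying $2^{O(n)}$ per lattice instead.

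One genuine error in your write-up: the claim that a prime $p\equiv 1\bmod m$ of magnitude $\exp(1.5n\log n)$ ``can be located in time polynomial in $n\log n$'' via Dirichlet's theorem. Dirichlet and the prime number theorem in arithmetic progressions give existence and density, not a deterministic polynomial-time search; deterministically one only knows how to sweep the progression inside the window guaranteed by Siegel--Walfisz, which is exactly the paper's dominant cost $e^{1.5n\log n(1+o(1))}$ (a randomized search would be fast in expectation, but that changes the model and still rests on an ineffective constant). This does not affect your final bound --- the brute-force search fits inside the budget, and your enumeration step costs $\exp(1.5n\log n(1+o(1)))$ anyway --- but the ``negligible'' claim should be replaced by the brute-force estimate of \lref{Para} rather than attributed to Dirichlet.
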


We need to find a prime ideal $\mathfrak{P}$ such that  $q_m=|\mathcal{O}_K/\mathfrak{P}|$ satisfies the condition required in $\tref{Dens Général}$. Let us recall that $q_m=p_m^{f_m}$ where $p_m$ is the prime number lying under $\mathfrak{P}$, and $f_m$ is the order of $p_m$ in the group $(\Z/m\Z)^*$ (see \cite{MR1421575}). We will restrict our attention to the case $f_m=1$, i.e when $p_m=1\mod m$. In that case, $p_m$ decomposes totally  in $\Q[\zeta_m]$, and $q_m=p_m$.
We use Siegel-Walfisz theorem in order to give an upper bound for the smallest such prime number:
\begin{lemm}\label{Para} For $m$ big enough, there is a prime number $p_m$ congruent to $1\mod m$ such that: 
$$ \frac{1}{2}(m^3\log m)^{\phi(m)} \leq p_m \leq (m^3\log m)^{\phi(m)}. $$

\end{lemm}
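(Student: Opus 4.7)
The plan is to apply the Siegel--Walfisz theorem to count primes congruent to $1 \pmod m$ in the dyadic interval $[x/2, x]$, where $x := (m^3\log m)^{\phi(m)}$. Rather than trying to pin down the smallest such prime, it will be enough to show that the count on this interval is positive---in fact enormous---so that at least one prime meeting the requirements must exist.

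First I would compute $\log x = \phi(m)(3\log m + \log\log m)$, so that $\log x \geq 3\phi(m)\log m$. Using the standard lower bound $\phi(m) \gg m/\log\log m$, this gives $m = O(\log x / \log\log x)$; in particular $m \leq (\log x)^N$ for every fixed $N\geq 1$ and $m$ large. Thus $m$ lies comfortably in the zone of uniformity of Siegel--Walfisz.

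Next I would invoke Siegel--Walfisz in the usual form: for every $N>0$ there is a constant $c=c(N)>0$ such that, uniformly in $m \leq (\log x)^N$ and in residues coprime to $m$,
$$\pi(x; m, 1) - \pi(x/2; m, 1) = \frac{\mathrm{li}(x) - \mathrm{li}(x/2)}{\phi(m)} + O\!\bigl(x\exp(-c\sqrt{\log x})\bigr).$$
The main term is of order at least $\frac{x}{\phi(m)\log x}$, an astronomical quantity (recall that $x$ is doubly exponential in $m$). The error $x\exp(-c\sqrt{\log x})$ is negligible compared with this main term, since $\log x \geq 3\phi(m)\log m$ forces $\exp(-c\sqrt{\log x})$ to decay much faster than $1/(\phi(m)\log x)$. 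Consequently the difference $\pi(x;m,1)-\pi(x/2;m,1)$ is much larger than $1$ for $m$ big enough, so there exists a prime $p_m \in [x/2, x]$ with $p_m \equiv 1 \pmod m$, which is exactly the claimed bound.

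The only slightly delicate point is verifying that $m$ sits in the uniformity range of Siegel--Walfisz and that the error term is genuinely dominated by the main term; both reduce to the elementary observation that $\log x$ grows essentially linearly in $m$ with our choice of $x$, so I would not expect a real obstacle here.
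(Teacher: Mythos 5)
Your proof is correct and follows essentially the same route as the paper: apply Siegel--Walfisz with modulus $m$ to the interval $[x/2,x]$ for $x=(m^3\log m)^{\phi(m)}$, and note that since $\log x \sim 3\phi(m)\log m$ the main term $\asymp x/(\phi(m)\log x)$ dominates the error, forcing a prime $p_m\equiv 1 \pmod m$ in that range. The only cosmetic differences are that you quote the stronger error term $x\exp(-c\sqrt{\log x})$ (the paper uses $O(x/(\log x)^2)$) and you make the uniformity range $m\leq(\log x)^N$ explicit, which the paper leaves implicit.
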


\begin{proof}
Let us denote by $\pi(x,m,a)$ the number of primes $p<x$ such that $p=a \mod m$. Siegel-Walfisz theorem (see \cite{book:1211416}) gives that for any $A>0$:
$$\pi(x,m,a)=\frac{Li(x)}{\phi(m)}+\mathcal{O}(\frac{x}{(\log x)^A}),$$
where the implied constant depends only on $A$, and $Li(x)=\int_2^x \frac{dt}{\log t}$.
Applying this theorem to $x=(m^3\log m)^{\phi(m)}$, $a=1$, and $A=2$ we get
$$\pi(x,m,1)-\pi(x/2,m,1)= \frac{1}{\phi(m)}\int_{x/2}^x \frac{dt}{\log t} +\mathcal{O}(\frac{x}{(\log x)^2}).$$
We have $\frac{1}{\phi(m)}\int_{x/2}^x \frac{dt}{\log t}> \frac{x}{2\phi(m)\log x}$, which grows faster than the error term since $\log x \sim 3\phi(m)\log(m)$, and thus ensures the existence of a prime $p_m$ between $x/2$ and $x$.
\end{proof}

\begin{proof}[Proof of \pref{Complex}]
Applying \lref{Para}, the complexity of finding $q_m$ satisfying the condition of \tref{Dens Général} is $$\mathcal{O}(m^3\log m)^{\phi(m)}=e^{3\phi(m)\log(m)(1+o(1))}=e^{1.5n\log(n)(1+o(1))}.$$
The corresponding family of lattices $\mathcal{L}_\mathcal{C}$ has $q_m + 1$ elements. By construction, each of these lattices is generated by vectors with coefficients which are polynomial in $n$. So, the cost of computing their density, which can be done with $2^{O(n)}$ operations, following \cite{Hanrot_algorithmsfor}, is negligible compared with the enumeration of the family. 
\end{proof}

\section{Symplectic cyclotomic lattices}

For a survey about symplectic lattices, we refer to \cite{Berge_symplecticlattices}. Here we briefly introduce this notion.
 
Let $E$ be a Euclidean space, and $\Lambda$ a lattice in $E$. Then an \textit{isoduality} is an isometry $\sigma$ of $E$ such that $\sigma(\Lambda)=\Lambda^\#$. If $\Lambda$ affords an isoduality, then it is called isodual. If moreover $\sigma$ satisfies $\sigma^2=-\text{Id}$, then $\Lambda$ is called \textit{symplectic}. 

Now we explain how to change the lattice $\Lambda_0$ in such a way that our construction provides symplectic lattices.

Let
$$\Lambda_0=\alpha^{-1} \mathcal{O}_K \times \alpha\,\mathfrak{P}^{-1}\mathcal{O}_K^\#,$$
where $\alpha=(q|d_K|)^\frac{1}{2\phi(m)}$.
The volume of $\Lambda_0$ is now
\begin{equation}\label{VolBaseSymp}
\Vol(\Lambda_0)=\Vol(\mathcal{O}_K)\Vol(\mathfrak{P}^{-1}\mathcal{O}_K^\#)=\frac{\Vol(\mathcal{O}_K)\Vol(\mathcal{O}_K^\#)}{q}=\frac{1}{q}.
\end{equation}

Let us define the map 
$$\fonc{\sigma}{K_\R^2}{K_\R^2}{(x_1,x_2)}{(-\overline{x_2},\overline{x_1})}.$$
It is clear that $\sigma$ is an isometry, and that $\sigma^2=-\text{Id}$. 

In the following lemma, we show that the lattices we defined in \dref{Famille} are now symplectic: 
\begin{lemm}\label{PropSymp}

If $C$ is a $F$-line of $\Lambda_0/\mathfrak{P}\Lambda_0$, then the lattice $\Lambda_C$ is symplectic.
\end{lemm}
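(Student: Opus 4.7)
The plan is to reformulate $\sigma(\Lambda_C)=\Lambda_C^\#$ as an integrality statement for a symplectic form, and then to verify that integrality by a direct expansion that hinges on the one-dimensionality of $C$.

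First I would remark that since $\sigma$ is an isometry satisfying $\sigma^2=-\Id$, its adjoint is $\sigma^*=\sigma^{-1}=-\sigma$, so the bilinear form
$$\omega(x,y):=\langle \sigma(x),y\rangle$$
is skew-symmetric on $E=K_\R^2$. The inclusion $\sigma(\Lambda_C)\subseteq \Lambda_C^\#$ is then equivalent to $\omega(\Lambda_C,\Lambda_C)\subseteq \Z$. Unfolding the definitions, and using that the trace of $K/\Q$ is stable under complex conjugation (which is a Galois element in the abelian extension $K/\Q$), one obtains the explicit formula
$$\omega\bigl((x_1,x_2),(y_1,y_2)\bigr)=\tr(x_1 y_2-x_2 y_1).$$

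The main step is then to check that $\tr(x_1 y_2-x_2 y_1)\in \Z$ for every $x,y\in \Lambda_C$. To do this, I would fix a lift $v=(v_1,v_2)\in \Lambda_0$ of a generator of the $F$-line $C$, so that every element of $\Lambda_C$ writes as $w+a v$ with $w\in \mathfrak{P}\Lambda_0$ and $a\in \mathcal{O}_K$. Expanding $x_1 y_2-x_2 y_1$ with $x=w^{(x)}+a v$ and $y=w^{(y)}+b v$ produces a family of terms in $a b\, v_1 v_2$ that cancel in pairs by the antisymmetric shape of the expression. Each surviving contribution involves either a product of two components coming from $\mathfrak{P}\Lambda_0 = \alpha^{-1}\mathfrak{P}\times \alpha\,\mathcal{O}_K^\#$, which lies in $\mathfrak{P}\,\mathcal{O}_K^\#\subseteq \mathcal{O}_K^\#$, or one component from $\mathfrak{P}\Lambda_0$ and one from $v\in\Lambda_0=\alpha^{-1}\mathcal{O}_K\times \alpha\,\mathfrak{P}^{-1}\mathcal{O}_K^\#$, which lies in $\mathfrak{P}\cdot \mathfrak{P}^{-1}\mathcal{O}_K^\# = \mathcal{O}_K^\#$ or in $\mathcal{O}_K^\#\cdot \mathcal{O}_K=\mathcal{O}_K^\#$. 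Since $\tr(\mathcal{O}_K^\#)\subseteq \Z$ by definition of $\mathcal{O}_K^\#$, this yields $\omega(x,y)\in \Z$, hence $\sigma(\Lambda_C)\subseteq \Lambda_C^\#$.

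To conclude, the inclusion should be promoted to an equality by a volume count. By (i) of \lref{Dens} applied with $s=2$ and $k=1$, together with \eqref{VolBaseSymp}, one has $\Vol(\Lambda_C)=q\,\Vol(\Lambda_0)=1$, so that $\Vol(\sigma(\Lambda_C))=\Vol(\Lambda_C)=1=\Vol(\Lambda_C^\#)$, and the inclusion established above must be an equality.

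The delicate point of the argument is precisely the cancellation of the $a b\, v_1 v_2$ contribution: this is where the one-dimensionality of $C$ is used in an essential way, since two independent generators $v,v'$ would leave residual cross terms $a b'\, v_1 v_2'\in \mathfrak{P}^{-1}\mathcal{O}_K^\#$ that no longer lie in $\mathcal{O}_K^\#$. It is this feature that restricts the symplectic construction to $F$-lines in $\Lambda_0/\mathfrak{P}\Lambda_0$ rather than to larger subspaces.
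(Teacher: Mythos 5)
Your proposal is correct and follows essentially the same route as the paper: reduce $\sigma(\Lambda_C)\subseteq\Lambda_C^\#$ to the integrality of $\tr(x_1y_2-x_2y_1)$, use that $C$ is an $F$-line so the $ab\,v_1v_2$ (in the paper, $\lambda\mu u_1u_2$) contributions cancel, check the remaining terms land in $\mathcal{O}_K^\#$ via the ideal factors $\alpha^{\pm1}$, $\mathfrak{P}^{\pm1}$, and conclude equality from $\Vol(\Lambda_C)=q\Vol(\Lambda_0)=1$. The paper phrases the expansion as congruences modulo $\alpha^{-1}\mathfrak{P}$ and $\alpha\,\mathcal{O}_K^\#$ rather than an explicit decomposition $x=w+av$, but the argument is the same.
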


\begin{proof}
Let us prove that $\sigma(\Lambda_C)\subset\Lambda_C^\#$. Let us take $(x_1,x_2)\in \Lambda_C$. We have to show that for every $(y_1,y_2)\in\Lambda_C$, $\langle \sigma(x_1,x_2),(y_1,y_2)\rangle\in\Z$, that is
\begin{equation}\label{ToshowSymp}
\tr(-x_2y_1)+\tr(x_1y_2)\in\Z.
\end{equation}
According to the definition of $C$, we have $C=F(u_1,u_2)$ with $u_1\in\alpha^{-1}\mathcal{O}_K$ and $u_2\in \alpha\,\mathfrak{P}^{-1}\mathcal{O}_K^\#$. So there exists $\lambda,\mu\in \mathcal{O}_K$ such that 
$$
\begin{cases}
x_1=\lambda u_1 \mod \alpha^{-1}\mathfrak{P}
\\
 x_2=\lambda u_2 \mod\alpha \, \mathcal{O}_K^\#
\end{cases}
\text{ and }
\begin{cases}
y_1=\mu u_1 \mod \alpha^{-1}\mathfrak{P}
\\
y_2=\mu u_2 \mod \alpha \, \mathcal{O}_K^\#
\end{cases}.
$$
This implies that 
$$\tr(x_1y_2)=\tr(\lambda\mu u_1 u_2) \mod \Z$$
and
$$\tr(x_2y_1)=\tr(\lambda\mu u_1 u_2) \mod \Z,$$

so that \eqref{ToshowSymp} is satisfied.

To conclude the proof it is enough to notice that $\Vol(\Lambda_C)=q\Vol(\Lambda_0)=1$, which implies $\sigma(\Lambda_C)=\Lambda_C^\#$.
\end{proof}

We again consider the set $\mathcal{C}$ of lines of $\Lambda_0/\mathfrak{P}\Lambda_0$. It is clear that the result of \lref{Moyenne} remains valid for this new family of codes. The general strategy underlying the proof of \tref{Dens Général} applies to the family of lattices associated to these codes, so that we get analogues in this context :

\begin{theo}\label{Dens Général Symp} For every $1>\varepsilon >0$, if $\phi(m)^2 m=o({q_m}^{\frac{1}{\phi(m)}})$, then for $m$ big enough, the family of \textbf{{symplectic}} lattices $\mathcal{L}_\mathcal{C}$ contains a lattice $\Lambda\subset \R^{2\phi(m)}$ satisfying
$$\Delta(\Lambda)> \frac{(1-\varepsilon)m}{2^{2\phi(m)}}.$$

\end{theo}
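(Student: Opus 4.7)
The plan is to follow the strategy of the proof of \tref{Dens Général} with $\mathcal{O}_K^2$ replaced by the new $\Lambda_0=\alpha^{-1}\mathcal{O}_K\times\alpha\,\mathfrak{P}^{-1}\mathcal{O}_K^\#$. Several ingredients transfer without change: the counting argument of \lref{Moyenne} and the Euclidean-vs-discrete ball comparison of \lref{LBoules} rely only on $\Lambda_0$ being an $\mathcal{O}_K$-submodule of $E$ endowed with the inherited norm, not on its specific shape; and \lref{PropSymp} already ensures that each lattice in $\mathcal{L}_\mathcal{C}$ is symplectic. So the task reduces to producing some $\Lambda_C\in\mathcal{L}_\mathcal{C}$ with the claimed density.

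The heart of the argument is the adaptation of \lref{Conds} to the new $\Lambda_0$. By \eqref{VolBaseSymp} we have $q_m\Vol(\Lambda_0)=1$, so the radius $r_m$ defined by $\Vol(B(r_m))=(1-\varepsilon)m\,q_m\Vol(\Lambda_0)=(1-\varepsilon)m$ satisfies $r_m\sim\rho_m$ with $\rho_m:=\sqrt{\phi(m)/(\pi e)}$, by \eqref{EquivRayon}. To control $\tau_{\Lambda_0}$ I would use $\tau_{L_1\times L_2}^2=\tau_{L_1}^2+\tau_{L_2}^2$ together with (ii) of \lref{BornesMinima} applied to each of the scaled ideals $\alpha^{-1}\mathcal{O}_K$ and $\alpha\,\mathfrak{P}^{-1}\mathcal{O}_K^\#$. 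The calibration $\alpha=(q_m|d_K|)^{1/(2\phi(m))}$ is chosen precisely so that both contributions land on the same scale, and a short computation using $|d_K|\leq m^{\phi(m)}$ yields
$$\tau_{\Lambda_0}\leq\sqrt{\phi(m)\,m/2}\,q_m^{-1/(2\phi(m))}.$$
Symmetrically, $\mathfrak{P}\Lambda_0=\alpha^{-1}\mathfrak{P}\times\alpha\,\mathcal{O}_K^\#$, and (i) of \lref{BornesMinima} applied to each factor produces two matching lower bounds, whose minimum satisfies
$$\mu_{\mathfrak{P}\Lambda_0}\geq\sqrt{\phi(m)/m}\,q_m^{1/(2\phi(m))}.$$

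With these two estimates, the hypothesis $\phi(m)^2 m=o(q_m^{1/\phi(m)})$ yields, exactly as in \lref{Conds}, both $\phi(m)\tau_{\Lambda_0}/\rho_m\to 0$ and $\rho_m<\mu_{\mathfrak{P}\Lambda_0}/2$ for $m$ big enough. From this point on the argument is a verbatim copy of the proof of \tref{Dens Général}: \lref{Moyenne}, (ii) of \lref{LBoules}, and the estimate $(1+\tau_{\Lambda_0}/r_m)^{2\phi(m)}\to 1$ combine into $\E(|\overline{B}(r_m)\cap(C\setminus\{0\})|)<m$ for $m$ large; the free action of the $m$-th roots of unity on non-zero codewords, which preserves the weight, upgrades this into the existence of some $C\in\mathcal{C}$ with $\overline{B}(r_m)\cap(C\setminus\{0\})=\emptyset$; and (iii) of \lref{LBoules}, together with $\Vol(\Lambda_C)=q_m\Vol(\Lambda_0)=1$, delivers $\Delta(\Lambda_C)>(1-\varepsilon)m/2^{2\phi(m)}$.

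The hard part is the delicate balancing in the two displayed estimates: the exponent $1/(2\phi(m))$ on $q_m|d_K|$ in the definition of $\alpha$ is precisely what is needed for neither factor of $\Lambda_0$ to dominate $\tau_{\Lambda_0}$ nor $\mu_{\mathfrak{P}\Lambda_0}$, and the hypothesis on $q_m$ must then absorb the lossy step $|d_K|^{\pm 1/(2\phi(m))}\leq\sqrt{m}$. Once this calibration is verified, the symplecticity from \lref{PropSymp} combines with the density estimate above to yield the theorem without further work.
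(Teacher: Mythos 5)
Your proposal is correct and follows essentially the same route as the paper: the paper likewise reduces the theorem to re-proving the technical lemma (its Lemma~\ref{Conds2}) for the new $\Lambda_0=\alpha^{-1}\mathcal{O}_K\times\alpha\,\mathfrak{P}^{-1}\mathcal{O}_K^\#$, obtaining via Lemma~\ref{BornesMinima} the same bounds $\tau_{\Lambda_0}\lesssim\sqrt{m\phi(m)}\,q_m^{-1/(2\phi(m))}$ and $\mu_{\mathfrak{P}\Lambda_0}\geq\sqrt{\phi(m)/m}\,q_m^{1/(2\phi(m))}$, and then repeating the averaging and roots-of-unity argument of Theorem~\ref{Dens Général} verbatim. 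Your calibration of $\alpha$ and the final density computation match the paper's.
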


\begin{coro}\label{Cor Ven Symp} For infinitely many dimensions, the family $\mathcal{L}_\mathcal{C}$ contains a \textbf{symplectic} lattice $\Lambda\subset \R^n$ satisfying
$$\Delta(\Lambda)\geq \frac{0.89 n \log\log n}{2^{n}}.$$

\end{coro}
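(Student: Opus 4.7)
The plan is to transcribe the proof of \cref{CorVen} almost verbatim, since \tref{Dens Général Symp} asserts exactly the same density bound as \tref{Dens Général} but now over the family of symplectic lattices, the symplectic property being guaranteed by \lref{PropSymp} once $\Lambda_0$ is replaced by $\alpha^{-1}\mathcal{O}_K \times \alpha\mathfrak{P}^{-1}\mathcal{O}_K^\#$. So essentially no new idea is required, just the same asymptotic manipulation of Mertens' estimate, applied to the symplectic version of the main theorem.

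Concretely, I would take the primorial sequence $m = \prod_{l \in \P,\, l \leq X} l$ and let $X \to \infty$. Mertens' theorem gives $m/\phi(m) \sim e^\gamma \log\log m$, hence, writing $n = 2\phi(m)$, one obtains
$$m \sim \frac{e^\gamma}{2}\, n \log\log n,$$
exactly as in \eqref{Mertens}. Since $e^\gamma / 2 > 0.89$, the constant $\delta := 2 e^{-\gamma} \cdot 0.89$ lies in $(0,1)$; set $\varepsilon := 1 - \delta$. Then \tref{Dens Général Symp}, applied with this $\varepsilon$ to each $m$ in the primorial sequence, yields for all sufficiently large $m$ a \textbf{symplectic} lattice $\Lambda \in \mathcal{L}_\mathcal{C}$ in dimension $n = 2\phi(m)$ with $\Delta(\Lambda) > \delta m / 2^n$. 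Substituting the asymptotic for $m$ gives $\Delta(\Lambda) \geq 0.89\, n \log\log n / 2^n$ for $m$ large enough, and since the primorial construction produces infinitely many distinct dimensions $n = 2\phi(m)$, the corollary follows.

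There is no serious obstacle. The two points worth checking are (a) that the hypothesis $\phi(m)^2 m = o(q_m^{1/\phi(m)})$ of \tref{Dens Général Symp} is satisfiable along the primorial sequence, which is automatic because Dirichlet's theorem (and more quantitatively \lref{Para}) lets us pick arbitrarily large primes $p_m \equiv 1 \pmod m$ totally split in $\Q[\zeta_m]$, so $q_m = p_m$ can be chosen to grow as fast as needed; and (b) that the lattice produced is genuinely symplectic, which is immediate from \lref{PropSymp} together with the volume normalisation \eqref{VolBaseSymp} that forces $\Vol(\Lambda_C) = 1$. Both are already in place in the previous section, so the argument is a line-by-line transcription of the proof of \cref{CorVen}.
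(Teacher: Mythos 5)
Your proposal is correct and follows essentially the same route as the paper: the paper's own proof of \cref{Cor Ven Symp} is exactly the argument of \cref{CorVen} (primorial $m$, Mertens' estimate \eqref{Mertens}, the constant $\delta=2e^{-\gamma}\cdot 0.89<1$) with \tref{Dens Général Symp} substituted for \tref{Dens Général}, the symplectic property coming from \lref{PropSymp}. Your additional checks (choice of $q_m$ via \lref{Para}, unit covolume forcing $\sigma(\Lambda_C)=\Lambda_C^\#$) are consistent with what the paper already established.
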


The proofs of \tref{Dens Général Symp} and \cref{Cor Ven Symp} are similar to those of \tref{Dens Général} and \cref{CorVen}. However, we need to prove that \lref{Conds} still holds, even if we changed $\Lambda_0$:

\begin{lemm}\label{Conds2} Let $\rho_m=\sqrt{\frac{\phi(m)}{\pi e}}(q_m\Vol(\Lambda_0))^{\frac{1}{2\phi(m)}}=\sqrt{\frac{\phi(m)}{\pi e}}$. 

If $\phi(m)^2 m=o({q_m}^{\frac{1}{\phi(m)}})$, then
\begin{enumerate}[(i)]
\item[\emph{(i)}] $\lim_{m\to \infty}\frac{\phi(m)\tau_{\Lambda_0}}{\rho_m}=0$,
\item[\emph{(ii)}] For $m$ big enough, $\rho_m< \frac{\mu_{\mathfrak{P}\Lambda_0}}{2}$.
\end{enumerate}

\end{lemm}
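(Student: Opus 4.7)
The plan is to mirror the proof of \lref{Conds}, adapted to the new lattice $\Lambda_0 = \mathfrak{A}_1 \times \mathfrak{A}_2$ with $\mathfrak{A}_1 := \alpha^{-1}\mathcal{O}_K$ and $\mathfrak{A}_2 := \alpha\,\mathfrak{P}^{-1}\mathcal{O}_K^{\#}$. The first observation to make is that the value $\alpha = (q_m|d_K|)^{1/(2\phi(m))}$ is engineered precisely to balance the two summands: a direct computation yields
$$\Vol(\mathfrak{A}_1) = \alpha^{-\phi(m)}|d_K|^{1/2} = q_m^{-1/2} = \alpha^{\phi(m)}q_m^{-1}|d_K|^{-1/2} = \Vol(\mathfrak{A}_2),$$
whence $\Vol(\Lambda_0) = q_m^{-1}$ (consistent with \eqref{VolBaseSymp}) and thus $\rho_m = \sqrt{\phi(m)/(\pi e)}$ as stated. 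An analogous computation gives $\Vol(\mathfrak{P}\mathfrak{A}_1) = \Vol(\alpha^{-1}\mathfrak{P}) = q_m^{1/2} = \Vol(\alpha\,\mathcal{O}_K^{\#}) = \Vol(\mathfrak{P}\mathfrak{A}_2)$, so the two components of $\mathfrak{P}\Lambda_0$ are balanced as well.

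For (i), I would use the orthogonal decomposition $\tau_{\Lambda_0}^2 = \tau_{\mathfrak{A}_1}^2 + \tau_{\mathfrak{A}_2}^2$ and apply \lref{BornesMinima}(ii) to each factor. Because both volumes are equal, both bounds collapse to $\tau_{\mathfrak{A}_i} \leq \tfrac{\sqrt{\phi(m)}}{2}|d_K|^{1/(2\phi(m))}q_m^{-1/(2\phi(m))}$; using $|d_K|^{1/\phi(m)} \leq m$ from \eqref{Disc} one obtains $\tau_{\Lambda_0} \leq \sqrt{m\phi(m)/2}\,q_m^{-1/(2\phi(m))}$. Dividing by $\rho_m$ then reproduces the estimate appearing in the proof of \lref{Conds}(i):
\[
\frac{\phi(m)\tau_{\Lambda_0}}{\rho_m} \leq \sqrt{\pi e/2}\,\phi(m)\sqrt{m}\,q_m^{-1/(2\phi(m))},
\]
which tends to $0$ by the hypothesis $\phi(m)^2 m = o(q_m^{1/\phi(m)})$.

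For (ii), since $\mathfrak{P}\Lambda_0 = \mathfrak{P}\mathfrak{A}_1 \times \mathfrak{P}\mathfrak{A}_2$ is an orthogonal sum, its minimum equals $\min(\mu_{\mathfrak{P}\mathfrak{A}_1},\mu_{\mathfrak{P}\mathfrak{A}_2})$. Applying \lref{BornesMinima}(i) to each summand (both of volume $q_m^{1/2}$) gives the common lower bound
$$\mu_{\mathfrak{P}\mathfrak{A}_i} \geq \frac{\sqrt{\phi(m)}\,q_m^{1/(2\phi(m))}}{|d_K|^{1/(2\phi(m))}} \geq \sqrt{\phi(m)/m}\,q_m^{1/(2\phi(m))}.$$
The desired inequality $\rho_m < \mu_{\mathfrak{P}\Lambda_0}/2$ then reduces to $2\sqrt{m/(\pi e)} < q_m^{1/(2\phi(m))}$, which holds for all sufficiently large $m$ since the hypothesis in particular forces $m = o(q_m^{1/\phi(m)})$.

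There is no real obstacle here: the symmetric scaling by $\alpha$ guarantees that both summands of $\Lambda_0$ contribute estimates of the same order of magnitude, and consequently the proof of \lref{Conds} carries over essentially unchanged once the new volumes are in hand. The one step calling for a little care is the bookkeeping with the codifferent, using $\Vol(\mathcal{O}_K^{\#}) = |d_K|^{-1/2}$ and $\Vol(\mathfrak{P}) = q_m|d_K|^{1/2}$, to verify the clean cancellations $\Vol(\mathfrak{A}_1) = \Vol(\mathfrak{A}_2) = q_m^{-1/2}$ and $\Vol(\mathfrak{P}\mathfrak{A}_1) = \Vol(\mathfrak{P}\mathfrak{A}_2) = q_m^{1/2}$ on which the whole reduction rests.
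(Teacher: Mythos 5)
Your proof is correct and follows essentially the same approach as the paper: decompose $\Lambda_0$ and $\mathfrak{P}\Lambda_0$ as orthogonal products of fractional ideals of equal volume $q_m^{\mp 1/2}$, apply \lref{BornesMinima} componentwise, and invoke the growth hypothesis on $q_m$. The only (harmless) deviation is that you keep the exact identity $\tau_{\Lambda_0}^2=\tau_{\mathfrak{A}_1}^2+\tau_{\mathfrak{A}_2}^2$, giving the bound $\sqrt{\pi e/2}\,\phi(m)\sqrt{m}\,q_m^{-1/(2\phi(m))}$, whereas the paper weakens this to $\sqrt{2}\max\{\tau_{\mathfrak{A}_1},\tau_{\mathfrak{A}_2}\}$ and obtains $\sqrt{\pi e}\,\phi(m)\sqrt{m}\,q_m^{-1/(2\phi(m))}$; both tend to $0$ under the stated hypothesis, so the difference is immaterial.
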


\begin{proof}
\begin{enumerate}[(i)]
\item We have:
$$\frac{\phi(m)\tau_{\Lambda_0}}{\rho_m}={\sqrt{\pi e \phi(m)} \tau_{\Lambda_0}}.$$

Let us set $\mathfrak{A}_1=\alpha^{-1} \mathcal{O}_K$ and $\mathfrak{A}_2= \alpha\,\mathfrak{P}^{-1}\mathcal{O}_K^\#$. Then $\Lambda_0=\mathfrak{A}_1\times\mathfrak{A}_2$ and the covering radius of $\Lambda_0$ is $\tau_{\Lambda_0}=\sqrt{\tau_{\mathfrak{A}_1}^2+\tau_{\mathfrak{A}_2}^2}$. So we have to bound both covering radii $\tau_{\mathfrak{A}_1}$ and $\tau_{\mathfrak{A}_2}$. Applying (ii) of \lref{BornesMinima}, and because $\Vol(\mathfrak{A}_1)=\Vol(\mathfrak{A}_2)=\frac{1}{\sqrt{q}} $, we have, for $i\in\{1,2\}$,
$$\tau_{\mathfrak{A}_i}\leq \frac{\sqrt{\phi(m)}}{2}|d_K|^{\frac{1}{2\phi(m)}}{q^{-\frac{1}{2\phi(m)}}}\leq \frac{\sqrt{m\phi(m)}\,q^{-\frac{1}{2\phi(m)}}}{2}  
.$$
So

$$\tau_{\Lambda_0}\leq \sqrt{2}\max\{\tau_{\mathfrak{A}_1},\tau_{\mathfrak{A}_2}\}\leq {\sqrt{m\phi(m)}}{q^{-\frac{1}{2\phi(m)}}}$$

and finally

$$\frac{\phi(m)\tau_{\Lambda_0}}{\rho_m}\leq{\sqrt{\pi e}}\,{ \phi(m) \sqrt{m}}\,{q_m^{-\frac{1}{2\phi(m)}}}$$
which tends to $0$ when $m$ goes to infinity, by hypothesis.

\item 

Let us set $\mathfrak{B}_1=\alpha^{-1} \mathfrak{P}$ and $\mathfrak{B}_2= \alpha\,\mathcal{O}_K^\#$. Then $\mathfrak{P}\Lambda_0=\mathfrak{B}_1\times \mathfrak{B}_2$, and clearly $\mu_{\mathfrak{P}\Lambda_0}=\min\{\mu_{\mathfrak{B}_1},\mu_{\mathfrak{B}_2}\}$. Then, applying (i) of \lref{BornesMinima}, since $\Vol(\mathfrak{B}_1)=\Vol(\mathfrak{B}_2)=\sqrt{q} $, we have, for $i\in\{1,2\}$,
$$
\mu_{\mathfrak{B}_i}\geq \frac{\sqrt{\phi(m)} \,q^{\frac{1}{2\phi(m)}}}{|d_K|^{\frac{1}{2\phi(m)}}}\geq \frac{\sqrt{\phi(m)}\, q^{\frac{1}{2\phi(m)}}}{\sqrt{m}}   . $$
So 
$$\mu_{\mathfrak{P}\Lambda_0}\geq \frac{\sqrt{\phi(m)} \,q^{\frac{1}{2\phi(m)}}}{\sqrt{m}}.$$
The hypothesis on $q_m$ ensures in particular that for $m$ big enough, $m$ satisfies $\sqrt{m}<q_m^{\frac{1}{2\phi(m)}}$, and thus
$$\rho_m=\sqrt{\frac{\phi(m)}{\pi e}}<\frac{1}{2} \frac{\sqrt{\phi(m)} \,q^{\frac{1}{2\phi(m)}}}{m} \leq \frac{\mu_{\mathfrak{P}\Lambda_0}}{2} .$$

\end{enumerate}
\end{proof}

As the condition on the growth of $q_m$ does not change, the estimation for the complexity of construction in this context is the same:
\begin{prop}\label{Complex Symp}
Let $n=2\phi(m)$. For every $1>\varepsilon>0$, the construction of a \textbf{symplectic} lattice $\Lambda\subset\R^n$ satisfying $$\Delta(\Lambda)> \frac{(1-\varepsilon)m}{2^{2\phi(m)}}$$ requires $\exp(1.5n\log n(1+o(1))$ binary operations. 
\end{prop}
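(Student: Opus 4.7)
The argument proceeds in close parallel with the proof of \pref{Complex}. The hypothesis on $q_m$ in \tref{Dens Général Symp} is exactly the same as in \tref{Dens Général}, namely $\phi(m)^2 m = o({q_m}^{1/\phi(m)})$, so \lref{Para} still furnishes a prime number $p_m\equiv 1 \mod m$ with
$$p_m \leq (m^3\log m)^{\phi(m)},$$
and we again choose $\mathfrak{P}$ to be any prime of $\mathcal{O}_K$ lying over $p_m$, so that $q_m = p_m$ satisfies the hypothesis. The cost of finding $p_m$ is therefore
$$\mathcal{O}(m^3\log m)^{\phi(m)}=e^{1.5n\log(n)(1+o(1))},$$
exactly as before.

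Once $\mathfrak{P}$ is fixed, the family $\mathcal{L}_\mathcal{C}$ still has $q_m+1$ elements, one for each $F$-line of $\Lambda_0/\mathfrak{P}\Lambda_0$, so the enumeration cost contributes at most $q_m+1 = e^{1.5n\log n (1+o(1))}$. The only new point compared to the proof of \pref{Complex} is that we must check that each individual symplectic lattice $\Lambda_C$ can be written down at a cost negligible compared with this enumeration. Here $\Lambda_0 = \alpha^{-1}\mathcal{O}_K\times\alpha\,\mathfrak{P}^{-1}\mathcal{O}_K^\#$, where $\alpha=(q_m|d_K|)^{1/2\phi(m)}$; a $\Z$-basis of $\mathcal{O}_K$, of $\mathcal{O}_K^\#$ and of $\mathfrak{P}$ can be computed in time polynomial in $n$ and $\log q_m$, and $\alpha$ itself is a single real algebraic number whose minimal polynomial over $\Q$ has coefficients polynomial in the same quantities. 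Hence the generators of $\Lambda_C$ have bit-size polynomial in $n$, so that computing $\Delta(\Lambda_C)$ with \cite{Hanrot_algorithmsfor} still costs at most $2^{O(n)}$ operations per lattice.

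The forward-looking point I expect to require the most care is the bookkeeping in the third paragraph above: one has to make sure that the scaling factor $\alpha$ and the inverses of fractional ideals do not silently contribute a super-exponential factor. Once this routine verification is done, summing the three contributions (finding $\mathfrak{P}$, listing the $q_m+1$ codes, and computing each density) gives a total bounded by $e^{1.5n\log n(1+o(1))}$, as asserted.
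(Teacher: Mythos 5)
Your argument is correct and essentially the same as the paper's, which simply observes that since the growth condition on $q_m$ is unchanged, the complexity analysis of \pref{Complex} (finding $p_m\equiv 1 \bmod m$ via \lref{Para}, enumerating the $q_m+1$ codes, and computing each density in $2^{O(n)}$ operations) carries over verbatim. Your extra check that the scaling by $\alpha=(q_m|d_K|)^{1/2\phi(m)}$ and the ideals $\mathcal{O}_K^\#$, $\mathfrak{P}^{-1}$ keep the generators' bit-size polynomial in $n$ is a sensible refinement the paper leaves implicit, but it does not change the route.
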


\bibliographystyle{alpha}
\bibliography{Bibliographie}

\begin{thebibliography}{CSB87}

\bibitem[Aut15]{autissier:hal-01022915}
Pascal Autissier.
\newblock {Vari{\'e}t{\'e}s ab{\'e}liennes et th{\'e}or{\`e}me de
  Minkowski-Hlawka}.
\newblock {\em Manuscripta Mathematica}, 2015.

\bibitem[Bal92]{MR1191572}
Keith Ball.
\newblock A lower bound for the optimal density of lattice packings.
\newblock {\em Internat. Math. Res. Notices}, (10):217--221, 1992.

\bibitem[Ber97]{Berge_symplecticlattices}
Anne-Marie Berg\'e.
\newblock Symplectic lattices.
\newblock {\em Contemporary Math.}, 272:9--22, 1997.

\bibitem[CK09]{MR2600869}
Henry Cohn and Abhinav Kumar.
\newblock Optimality and uniqueness of the {L}eech lattice among lattices.
\newblock {\em Ann. of Math. (2)}, 170(3):1003--1050, 2009.

\bibitem[CSB87]{Conway:1987:SLG:39091}
J.~H. Conway, N.~J.~A. Sloane, and E.~Bannai.
\newblock {\em Sphere-packings, Lattices, and Groups}.
\newblock Springer-Verlag New York, Inc., New York, NY, USA, 1987.

\bibitem[DR47]{MR0021018}
H.~Davenport and C.~A. Rogers.
\newblock Hlawka's theorem in the geometry of numbers.
\newblock {\em Duke Math. J.}, 14:367--375, 1947.

\bibitem[Flu06]{BayerFluckiger2006305}
Eva~Bayer Fluckiger.
\newblock Upper bounds for euclidean minima of algebraic number fields.
\newblock {\em Journal of Number Theory}, 121(2):305 -- 323, 2006.

\bibitem[GZ07]{MR2339326}
Philippe Gaborit and Gilles Z{\'e}mor.
\newblock On the construction of dense lattices with a given automorphisms
  group.
\newblock {\em Ann. Inst. Fourier (Grenoble)}, 57(4):1051--1062, 2007.

\bibitem[Har]{MR1574590}
G.~H. Hardy.
\newblock Note on a {T}heorem of {M}ertens.
\newblock {\em J. London Math. Soc.}, S1-2(2):70.

\bibitem[Hla43]{MR0009782}
Edmund Hlawka.
\newblock Zur {G}eometrie der {Z}ahlen.
\newblock {\em Math. Z.}, 49:285--312, 1943.

\bibitem[HPS]{Hanrot_algorithmsfor}
Guillaume Hanrot, Xavier Pujol, and Damien Stehlé.
\newblock Algorithms for the shortest and closest lattice vector problems.
\newblock In {\em In Yeow Meng Chee, Zhenbo Guo, San Ling, Fengjing Shao,
  Yuansheng Tang, Huaxiong Wang, and Chaoping Xing, editors, IWCC, volume 6639
  of Lecture Notes in Computer Science}, pages 159--190. Springer.

\bibitem[IH04]{book:1211416}
Kowalski~E. Iwaniec~H.
\newblock {\em Analytic number theory}.
\newblock 2004.

\bibitem[Rog47]{MR0022863}
C.~A. Rogers.
\newblock Existence theorems in the geometry of numbers.
\newblock {\em Ann. of Math. (2)}, 48:994--1002, 1947.

\bibitem[Rus89]{MR1022304}
J.~A. Rush.
\newblock A lower bound on packing density.
\newblock {\em Invent. Math.}, 98(3):499--509, 1989.

\bibitem[Sie45]{MR0012093}
Carl~Ludwig Siegel.
\newblock A mean value theorem in geometry of numbers.
\newblock {\em Ann. of Math. (2)}, 46:340--347, 1945.

\bibitem[Van11]{MR2803798}
Stephanie Vance.
\newblock Improved sphere packing lower bounds from {H}urwitz lattices.
\newblock {\em Adv. Math.}, 227(5):2144--2156, 2011.

\bibitem[Ven13]{MR3044452}
Akshay Venkatesh.
\newblock A note on sphere packings in high dimension.
\newblock {\em Int. Math. Res. Not. IMRN}, (7):1628--1642, 2013.

\bibitem[Was97]{MR1421575}
Lawrence~C. Washington.
\newblock {\em Introduction to cyclotomic fields}, volume~83 of {\em Graduate
  Texts in Mathematics}.
\newblock Springer-Verlag, New York, second edition, 1997.

\end{thebibliography}

\end{document}